\definecolor{darkred}{RGB}{150,0,0}
\definecolor{darkgreen}{RGB}{0,100,0}
\definecolor{darkblue}{RGB}{0,0,200}
\newtheorem{thm}{Theorem}
\newtheorem{prp}{Proposition}
\newtheorem{lem}{Lemma}
\theoremstyle{remark}
\newtheorem{rem}{Remark}
\def\beq{\begin{equation}}
\def\eeq{\end{equation}}
\def\beqn{\begin{eqnarray*}}
\def\eeqn{\end{eqnarray*}}
\def\bitem{\begin{itemize}}
\def\eitem{\end{itemize}}
\def\benum{\begin{enumerate}}
\def\eenum{\end{enumerate}}
\newcommand{\thmref}[1]{Theorem~\ref{thm:#1}}
\newcommand{\prpref}[1]{Proposition~\ref{prp:#1}}
\newcommand{\lemref}[1]{Lemma~\ref{lem:#1}}
\newcommand{\secref}[1]{Section~\ref{sec:#1}}
\newcommand{\figref}[1]{Figure~\ref{fig:#1}}
\newcommand{\remref}[1]{Remark~\ref{rem:#1}}
\DeclareMathOperator*{\argmin}{arg\, min}
\DeclareMathOperator{\diam}{diam}
\DeclareMathOperator{\dist}{dist}
\DeclareMathOperator{\acos}{acos}
\DeclareMathOperator{\asin}{asin}
\def\cB{\mathcal{B}}
\def\cE{\mathcal{E}}
\def\cH{\mathcal{H}}
\def\cX{\mathcal{X}}
\def\bbR{\mathbb{R}}
\newcommand{\E}{\operatorname{\mathbb{E}}}
\renewcommand{\P}{\operatorname{\mathbb{P}}}
\newcommand{\pr}[1]{\mathbb{P}\left(#1\right)}
\newcommand{\<}{\langle}
\renewcommand{\>}{\rangle}
\def\eps{\varepsilon}
\def\comp{\mathsf{c}}
\newcommand{\overarc}[1]{{\rm arc}{(#1)}}
\def\1{\mathbbm{1}}
\begin{document}		

\title{On Estimating the Perimeter Using the Alpha-Shape}
\author{
Ery Arias-Castro\footnote{Department of Mathematics, University of California, San Diego \{\href{http://math.ucsd.edu/~eariasca}{http://math.ucsd.edu/{\tt \~{}}eariasca}\}} \ and
Alberto Rodr\'iguez Casal\footnote{Departamento de Estat\'istica e Investigaci\'on Operativa, Facultade de Matem\'aticas, Universidade de Santiago de Compostela
\{\href{http://eio.usc.es/pub/alberto/}{http://eio.usc.es/pub/alberto/}\}}
}
\date{}

\maketitle

\begin{abstract}
We consider the problem of estimating the perimeter of a smooth domain in the plane based on a sample from the uniform distribution over the domain.  We study the performance of the estimator defined as the perimeter of the alpha-shape of the sample.  Some numerical experiments corroborate our theoretical findings.

\medskip
\noindent {\bf Keywords:} perimeter estimation; $\alpha$-shape; $r$-convex hull; rolling condition; sets with positive reach.
\end{abstract}

\section{Introduction}

The problem of recovering topological and geometric information about the support of a distribution based on a sample has received a considerable amount of attention in a number of fields, such as computational geometry, computer vision, image analysis, clustering or pattern recognition.  This includes, for example, estimating of the number of connected components~\citep{MR2320821}, the intrinsic dimensionality~\citep{levina-bickel} and, more generally, the homology~\citep{MR2383768,MR2476414,MR2121296,MR2460371,MR1876386}, the Minkowski content~\citep{MR2341697}, as well as the perimeter and area~\citep{MR1641826,MR0169139}.
The estimation of the support or, more generally, level sets of a density is itself a rich line of research \citep{MR1345204,MR2541446,cad06, MR1447735,walther97,poincare}.  A closely related topic is that of set estimation \citep{MR1332579,cuefra10}.
We refer the reader to the classic book of \cite{MR1226450}, which treats a number of these topics.

We focus here on the problem of estimating the perimeter of the support.  Concretely, we are given a set of points $\cX_n=\{X_1,\ldots,X_n\}$, which we assume are independently
sampled uniformly at random from an unknown compact set $S \subset \bbR^2$, and our goal is to estimate the perimeter of $S$, by which we mean the length of its
boundary.  Let $\partial S$ denote the boundary of a set $S \subset \bbR^2$, namely $\partial S = \bar{S} \cap \overline{S^\comp}$, where $\bar{S}$ denotes the closure
of $S$ and $S^\comp = \bbR^2 \setminus S$ is the complement of $S$.

\subsection{Related work}
\cite{MR0169139} address this problem under the assumption that $S$ is convex and estimate its perimeter by the perimeter of the convex hull of the
sample $\cX_n$.  They obtain the precise rate of convergence in expectation, which is of order $O(n^{-2/3})$ when the boundary $\partial S$ has bounded curvature.  They also obtain an analogous result for the problem of estimating the area of $S$. \cite{MR1641826} extend their results to other sampling distributions.  See \citep{Reitzner10} for a review on more recent results on the convex hull of a random sample.

There is a series of papers that consider the problem of estimating the surface area of the boundary
of a more general class of supports $S$ but under a different sampling scheme where two samples are given, one from the uniform distribution on $S$ and
another from the uniform distribution on $G \setminus S$, where $G$ is a bounded set containing $S$.
In that line, \cite{CueFraRod07} aim at estimating the Minkowski content of $\partial S$, and introduce an estimator that is proved to be consistent under weak assumptions on the set $S$.
They obtain a convergence rate of $O(n^{-1/4})$ in dimension 2 when $\partial S$ has bounded curvature---in which case the Minkowski content coincides with the perimeter.
\cite{patrod09,PatRod08} follow their work and propose a different estimator, which is very closely related to the one we study here, obtaining an improved rate convergence of $O(n^{-1/3})$ in dimension 2.
Continuing this line of work, \cite{JimYuk11} propose an estimator of the perimeter of $S$ based on a Delaunay triangulation, which is shown to be consistent under mild assumptions on $S$.

Also closely related is the work of \cite{kim2000estimation} in the context of binary images, which includes the estimation of the length of the boundary of a horizon of the form $\{(x,y) \in [0,1]^2 : y \le g(x)\}$, where $g : [0,1] \mapsto [0,1]$ is a function with H\"older regularity.  See \secref{discussion} for further comments.

\subsection{The $r$-rolling condition}

A set $S$ is said to fulfill the $r$-rolling condition if for any
$x\in \partial S$ there is a
open ball with radius $r$, $B$, such that $B \cap S=\emptyset$ and $x\in \partial B$.
In this paper, we work under the assumption that $S$ satisfies the following condition:

\begin{quote}
{\em $S$ is a compact subset of $\bbR^2$ such that both $S$ and $S^\comp$ satisfy the $r$-rolling condition.}
\end{quote}

From a geometrical
point of view, we are assuming that a ball of radius $r$ can roll inside $S$ and $S^\comp$.
This rolling condition implies
that, for any $x\in\partial S$, there are two open balls
$B^+$ and $B^-$ such that $x\in \partial B^+\cap \partial B^-$, $B^+\subset S$ and $B^-\subset S^\comp$. In fact, it can be easily seen \citep[Lemma A.0.1]{bea-tesis}
that this is only possible if there is a (unique) unit vector $\eta_x$ (the unit normal vector at $x$ pointing outward) such
that $B^+=B(x-r\eta_x,r)$ and $B^-=B(x+r\eta_x,r)$, where $B(a,\alpha)$ denotes the open ball with radius $\alpha$ and center $a\in\mathbb{R}^2$.
See \citep{Walther99} for a comprehensive discussion, including a relation to Serra's regular model and mathematical morphology.
The $r$-rolling condition is closely linked to the notion of {\em $r$-convexity}. A set $S$ is said to be $r$-convex if for any point $x \notin \bar{S}$ there is a open ball $B$ of radius $r$ such
that $x\in B$ and $B \cap \bar{S}=\emptyset$ \citep{MR0077161,walther97}.
It is known that, if both $S$ and $S^{c}$ satisfy the $r$-rolling condition, then $S$ and $S^\comp$ are $r$-convex; see \cite[Lemma A.0.8]{bea-tesis} and also \citep{Walther99}.

The $r$-rolling condition is also connected with the idea of sets of positive reach introduced in the seminal paper \citep{MR0110078}. For a nonempty set $S \subset \bbR^2$ and $x \in \bbR^2$, define
$$
\dist(x, S) = \inf\{\|x - s\|: s \in S\},
$$
where $\|\cdot\|$ stands for the Euclidean norm.
The reach of a set $S$, denoted $\rho(S)$, is the supremum over $r > 0$ such that there is a unique point realizing $\inf\{\|x - s\|: s \in S\}$ on the set $\{x: \dist(x, S) < r\}.$
For twice differentiable submanifolds (e.g., curves), the reach bounds the radius of curvature from above~\citep[Lem.~4.17]{MR0110078}.
Also, if $S$ and $S^\comp$ satisfy the $r$-rolling condition then $\rho(\partial S)\geq r$; see \cite[Lemma A.0.6]{bea-tesis}.
Conversely, using results in \citep{cuefrapat12}, it follows easily that the converse is true if, in addition, $S$ is equal to the closure of its interior.

\subsection{The estimator}
Our estimator for the perimeter of $S$ is the perimeter of the {\em $\alpha$-shape} of $\cX_n$, for some fixed $0 < \alpha < r$.
The $\alpha$-shape of $\cX_n$ is the polygon, denoted $C_\alpha(\cX_n)$, whose edges---which we call $\alpha$-edges---are defined as follows \citep{MR713690}.
A pair $(X_{i}, X_{j})$ forms
an $\alpha$-edge if there is an open ball $B$ of radius $\alpha$ such that $X_{i}, X_{j} \in \partial B$ and $B \cap \cX_n = \emptyset$.
If $\alpha$ is large enough, the $\alpha$-shape coincides with the convex hull of the sample.  For a smaller $\alpha$, the $\alpha$-shape is not necessarily convex.  See \figref{example} for an illustration.
The $\alpha$-shape is well known in the computational geometry literature for producing good global reconstructions if the sample points are (approximately) uniformly distributed in the
set $S$.
Moreover, it can be computed efficiently in time $O(n \log n)$.
See \citep{edelsbrunner2010alpha} for a survey.

\cite{cuefrapat12} estimate the perimeter of $S$ by the outer Minskowski content of the {\em $r$-convex hull} of the sample, defined as the smallest $r$-convex set that contains
the sample.
Since the boundary of that set is smooth except at a finite number of points, the outer Minskowski coincides with the perimeter.
See \citep{ambrosio2008outer} for a broader correspondence between these two quantities.
\cite{cuefrapat12} show that this estimator is consistent, but no convergence rate is provided.  Note that, for large sample sizes, both estimators are quite similar; see \prpref{similar} for a formal statement.
From the computational point of view, the $\alpha$-shape of
the sample tends to be more stable with respect to the value of $\alpha$, and is faster to compute over a range of values of $\alpha$---the latter can be done in $O(n \log n)$ time, since the $\alpha$-shape changes a finite number of times with $\alpha$.  The $\alpha$-convex hull of the sample does not enjoy such properties.

\begin{center}
\begin{figure}[h]
\begin{minipage}{0.32\textwidth}\centering
\scalebox{0.20}{\includegraphics{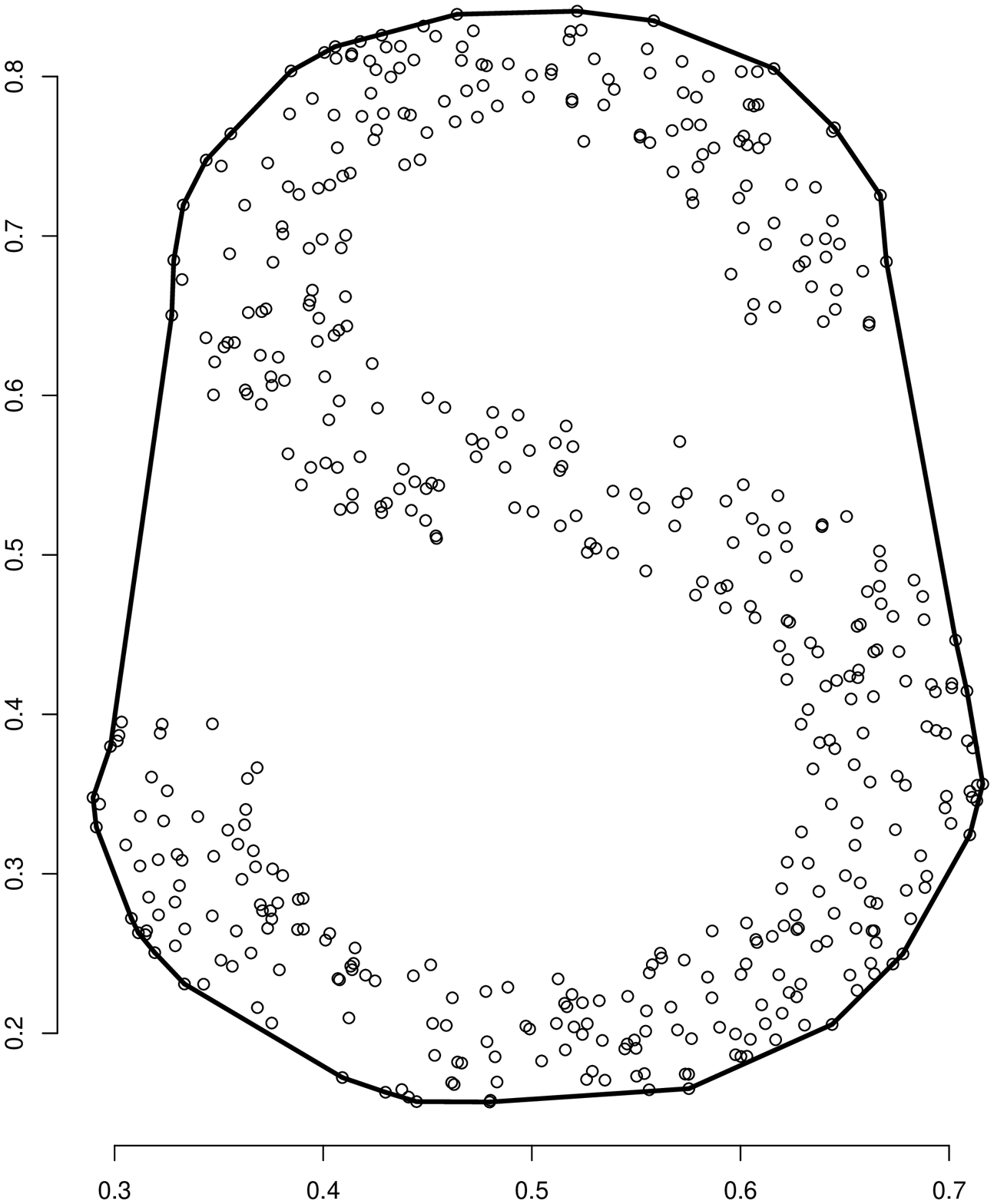}}
\end{minipage} \
\begin{minipage}{0.32\textwidth}\centering
\scalebox{0.20}{\includegraphics{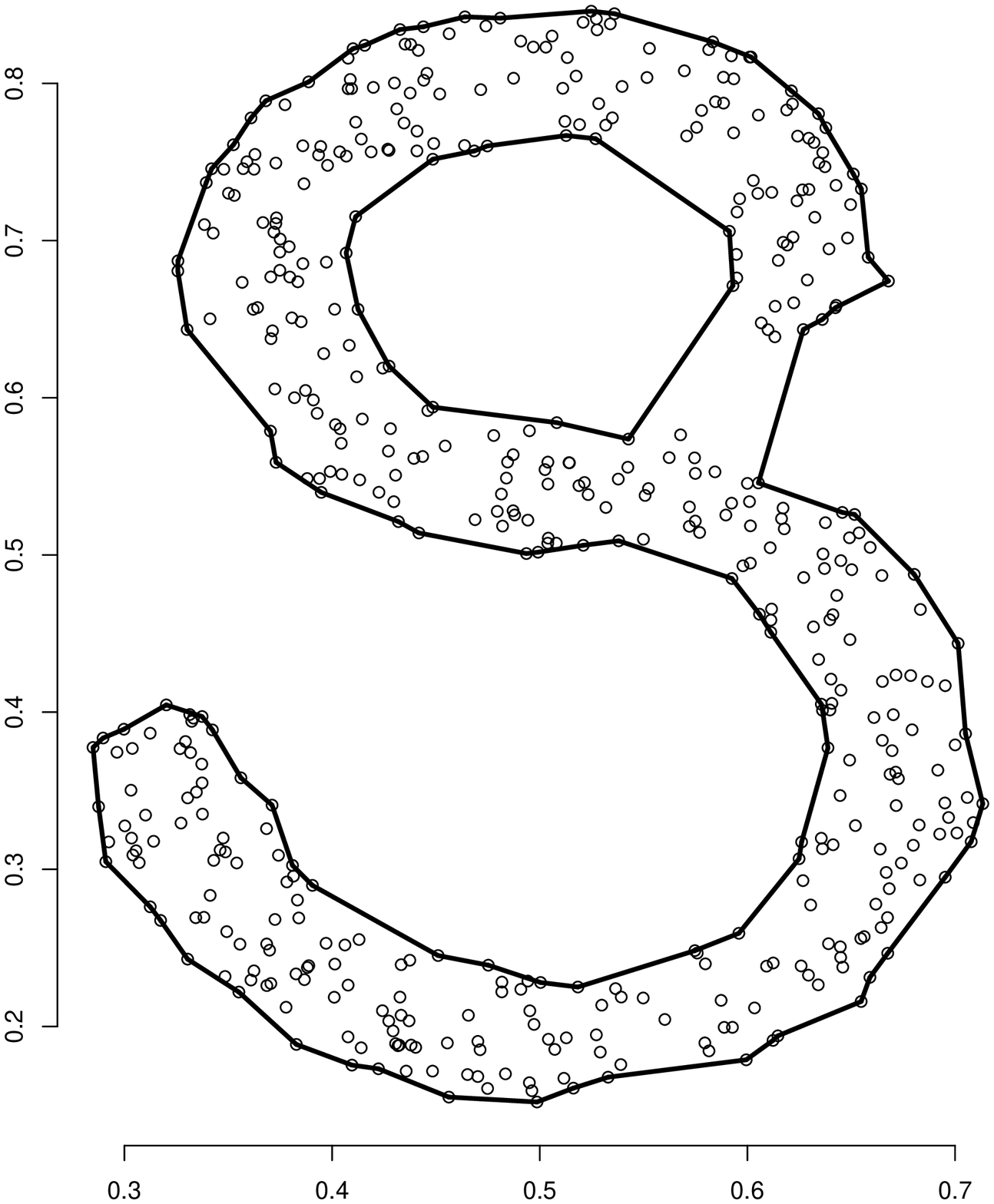}}
\end{minipage} \
\begin{minipage}{0.32\textwidth}\centering
\scalebox{0.20}{\includegraphics{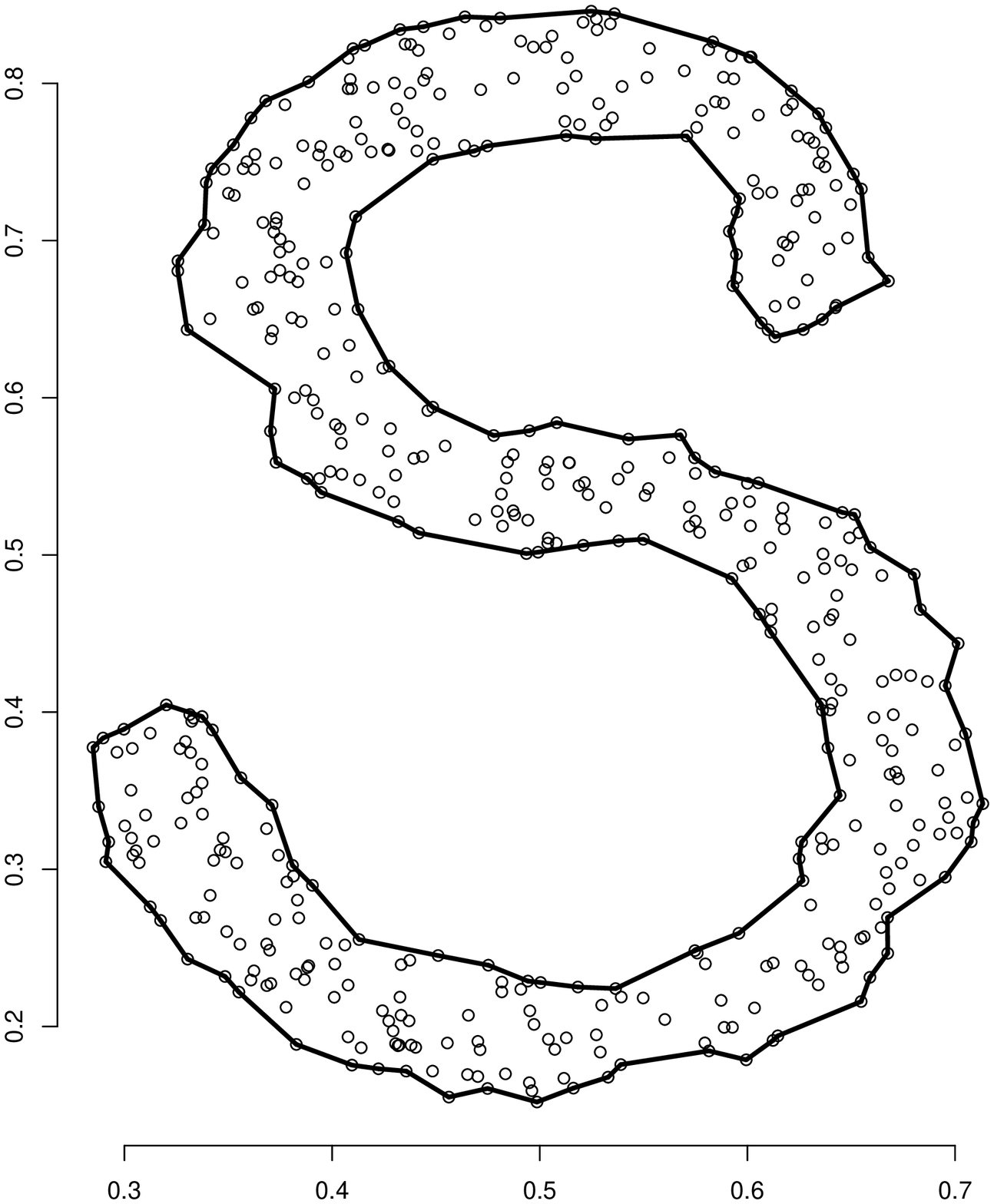}}
\end{minipage}
\label{fig:example}
\caption{The $\alpha$-shape of a sample of size $n=500$ from the uniform distribution of a thick S letter, for $\alpha=1$ (left), $\alpha=0.06$ (center) and $\alpha=0.035$ (right).
Note that in the second case the $\alpha$-shape is made of two disconnected closed curves.}
\end{figure}
\end{center}

\subsection{Main results}
Let $\lambda$ denote the one-dimensional Hausdorff measure in $\bbR^2$, normalized so that it equals 1 for a line segment of length 1, and let $\diam(A)=\sup\{\|x-y\|: \ x,y\in A\}$
denote the diameter of a set $A\subset \bbR^2$.

\begin{thm} \label{thm:main}
Let $\cX_n = (X_1,\ldots,X_n)$ be an independent sample from the uniform distribution on a compact set $S \subset \bbR^2$ such that $S$ and $S^\comp$ satisfy the $r$-rolling condition.  Fix $\alpha \in (0, r)$.  There is a
constant $A$ depending only on $(\alpha, r, \diam(S))$ and $t_0>0$ depending only on $(\alpha, r)$ such that, for all $0 \le t \le t_0$,
\begin{equation} \label{main}
\P\left(\left|\frac{\lambda(C_\alpha)}{\lambda(\partial S)}-1\right| > t\right) \leq A n^2 \exp(- n t^{3/2}/A ).
\end{equation}
\end{thm}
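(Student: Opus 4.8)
The plan is to reduce the probabilistic statement to a purely deterministic geometric estimate governed by the single quantity $g_0 := \max_e \ell_e$, the largest edge length of $C_\alpha$, and then to control the upper tail of $g_0$. The cornerstone is an elementary area estimate: \emph{every $\alpha$-edge of length $\ell$ certifies an empty region of area $\gtrsim \ell^3/\alpha$ inside $S$.} Indeed, if $(X_i,X_j)$ is an $\alpha$-edge with $\|X_i-X_j\|=\ell$, there is an empty open ball $B$ of radius $\alpha$ with $X_i,X_j\in\partial B$. Locally $S$ differs from a half-plane by at most $O(\alpha^2/r)$ (curvature $\le 1/r$ and $\alpha<r$), so $B\cap S$ is, up to a controlled perturbation, a circular cap of the disk $B$. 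Since this cap contains the two points $X_i,X_j$ at distance $\ell$, it has width at least $\ell$, hence height $\gtrsim \ell^2/\alpha$ and area $\gtrsim \ell^3/\alpha$. As $B$ is empty of sample points, so is this cap.

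First I would convert this into a tail bound for $g_0$. Conditionally on a pair $X_i,X_j$ with $\|X_i-X_j\|\ge g$, each of the (at most two) radius-$\alpha$ balls through the pair is empty with probability $\bigl(1-\mathrm{area}(B\cap S)/\mathrm{area}(S)\bigr)^{n-2}\le\exp(-c\,n g^3/\alpha)$, where the constant absorbs the upper bound $\mathrm{area}(S)\le C\diam(S)^2$. A union bound over the $\binom{n}{2}$ pairs then yields
\begin{equation}
\P\bigl(g_0\ge g\bigr)\;\le\;A\,n^2\exp\!\bigl(-c\,n g^3/\alpha\bigr).
\end{equation}
Taking $g=(At)^{1/2}$ makes the exponent of order $n\,t^{3/2}$, which is precisely the rate in \eqref{main}; it therefore remains to prove that $\{g_0\le g\}$ forces the relative perimeter error to be $\lesssim t$.

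Next I would establish the deterministic estimate on the event $\{g_0\le g\}$, which requires two ingredients. \emph{(Topology and ordering.)} For $g$ small, every vertex of $C_\alpha$ lies within depth $O(g^2/\alpha)<r$ of $\partial S$, so the nearest-point projection $\pi$ onto $\partial S$ is well defined (the reach of $\partial S$ is at least $r$); consecutive vertices then project to correctly ordered points, the subtended arcs partition $\partial S$, and $C_\alpha$ is homeomorphic to $S$. Consequently $\lambda(C_\alpha)=\sum_e\ell_e$ and $\lambda(\partial S)=\sum_e a_e$, with $a_e$ the arc subtended by edge $e$. \emph{(Third-order chord--arc comparison.)} The curvature bound gives $a_e-\|\pi(X_i)-\pi(X_j)\|\le C a_e^3/r^2$, while the two endpoints share an $\alpha$-ball, constraining their depth difference to $O(g^2/\alpha)$ and hence $\bigl|\ell_e-\|\pi(X_i)-\pi(X_j)\|\bigr|\le C\ell_e^3/\alpha^2$. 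Combining, $|a_e-\ell_e|\le C\ell_e^3/\alpha^2$, and summing over edges (with $\sum_e\ell_e\asymp L:=\lambda(\partial S)$) gives $|\lambda(C_\alpha)-\lambda(\partial S)|\le C\,(g^2/\alpha^2)\,L$. With $g=(At)^{1/2}$ this is $\le t\,L$ after adjusting $A$, closing the reduction.

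The main obstacle I anticipate is the deterministic step, not the probabilistic one. The cap estimate and the ensuing union bound are clean, but the chord--arc comparison is delicate precisely because the vertices lie strictly inside $S$: a vertex of depth $O(g^2/\alpha)$ could naively contribute an $O(g^2)$ error \emph{per unit length}, degrading the rate from $t$ to $\sqrt t$. The crux is the cancellation keeping the discrepancy third order in $\ell_e$ — namely, that the two endpoints of each edge, sitting on a common empty $\alpha$-ball, have nearly equal depth, so the tilt correction $(d_i-d_j)^2/(2\ell_e)$ is itself $O(\ell_e^3/\alpha^2)$. Proving this uniformly, together with the topological fidelity of $C_\alpha$ (correct number of components and monotone vertex ordering) on $\{g_0\le g\}$, is where the real work lies; the two-sided rolling condition and the strict inequality $\alpha<r$ are used throughout to keep all caps and projections under control, and they account for the dependence of $t_0$ on $(\alpha,r)$ and of $A$ on $(\alpha,r,\diam(S))$.
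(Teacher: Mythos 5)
Your probabilistic machinery --- the cap-area estimate $\mu(B\cap S)\gtrsim \ell^{3}/\alpha$, the union bound over the $\binom{n}{2}$ pairs, and the substitution $g=(At)^{1/2}$ producing the exponent $n t^{3/2}$ --- is essentially the paper's, and your chord--arc telescoping is a workable substitute for the length-comparison lemma the paper imports (Lemma~\ref{lem:main-approx}). But there is a genuine gap in the reduction: the event $\{g_0\le g\}$ does \emph{not} force the geometric control your deterministic step needs. Both the claim that every vertex lies at depth $O(g^{2}/\alpha)$ and the ``crux cancellation'' $|d_i-d_j|=O(\ell_e^{2}/\alpha)$ are really statements about the penetration depth $\delta=\alpha-\dist(\zeta,S)$ of the empty circumscribing ball $B(\zeta,\alpha)$, not about the edge length $\ell_e$. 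The geometry gives only the one-sided relation $\ell_e\le C\sqrt{\delta}$ (the two witness points lie in a window of $\partial B(\zeta,\alpha)\cap S$ of width $O(\sqrt{\delta})$, with constants depending on $\alpha,r$); it does not give $\delta\lesssim\ell_e^{2}$. A ball penetrating to depth $\delta\gg g^{2}$ whose two witness points happen to sit close together near its deepest point produces a \emph{short} edge with a \emph{deep} vertex; the depths of the endpoints of such an edge can then differ by order $\ell_e\sqrt{\delta/\alpha}\gg\ell_e^{2}/\alpha$, so the tilt correction $(d_i-d_j)^{2}/(2\ell_e)$ is of order $\ell_e\delta/\alpha\gg\ell_e^{3}/\alpha^{2}$ and the third-order chord--arc bound fails; the Hausdorff proximity and the injectivity/ordering of the projection fail with it. Such configurations escape your union bound entirely, because you only union over pairs at mutual distance at least $g$.

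The fix is to make the depth, rather than the maximal edge length, the controlled quantity: for \emph{every} data pair, bound the probability that one of its two circumscribing $\alpha$-balls is empty while penetrating to depth $\ge s$; the same area estimate gives $\exp(-c\,n s^{3/2})$ per pair, hence $A n^{2}\exp(-n s^{3/2}/A)$ overall, and with $s\asymp t$ this is exactly the rate in \eqref{main}. From $\delta\le s$ one then \emph{derives} $\ell_e\le\sqrt{Cs}$, the $O(s)$ proximity of each edge to $\partial S$, and the $O(\sqrt{s})$ deviation angle --- which is precisely the paper's route (Lemmas~\ref{lem:ball-vol} and~\ref{lem:delta}, Proposition~\ref{prp:edge}), feeding into the one-to-one correspondence with $\partial S$ (Propositions~\ref{prp:injective} and~\ref{prp:polygon}) and then the length comparison. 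You also need to rule out empty $\alpha$-balls centered inside $S$ (Proposition~\ref{prp:center-out}) before the cap picture applies at all. With the depth-based event replacing $\{g_0\le g\}$, the rest of your outline can be carried through.
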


\begin{rem}
In particular, defining $\eps_n = (3 A \log(n)/n)^{2/3}$, with probability one,
\[(1-\eps_n) \lambda(\partial S) \le \lambda(C_\alpha(\cX_n)) \le (1+\eps_n) \lambda(\partial S),\]
eventually, by applying the Borel-Cantelli lemma. So, the convergence rate of $\lambda(C_\alpha(\cX_n))$ as an estimator of $\lambda(\partial S)$ is, up to a log factor, of order $n^{-2/3}$.  
\end{rem}

\begin{rem}
We will argue later on that the same result holds also for the perimeter of the $\alpha$-convex hull of the sample, refining, thus, the 
convergence established in \citep{cuefrapat12}.  See the discussion in \secref{discussion}.
\end{rem}

\subsection{Content}
The remaining of the paper is largely devoted to proving \thmref{main}.
In \secref{geom} we establish some auxiliary geometrical results.
\secref{edges} is dedicated to the study of $\alpha$-edges.
\thmref{main} is proved in \secref{main}.
Some numerical experiments are presented in \secref{numerical}.
We discuss some extensions and open problems in \secref{discussion}.

\subsection{Notation and preliminaries} \label{sec:prelim}
We start by introducing some notation and some general concepts.
Let $\mu(A)$ denote the Lebesgue measure of a measurable set $A \subset \bbR^2$.
For a pair of distinct points $x_1, x_2 \in \bbR^2$, let $(x_1 x_2)$ denote the line passing through $x_1$ and $x_2$, and let $[x_1 x_2]$ denote the line segment with endpoints $x_1$ and $x_2$.
For a non empty set $A \subset \bbR^2$ and $\eps > 0$, define
$$
B(A,\eps) = \{x \in \bbR^2: \dist(x, A) < \eps\}.
$$
If $A=\{x\}$ is a singleton we use the notation $B(x,\eps)$ (resp. $\bar{B}(x,\eps)$) instead of $B(\{x\},\eps)$ for denoting
the open (resp.~closed) ball of radius $\eps>0$ and center $x\in\bbR^2$. Let $P_{A}$ denote
the metric projection onto a set $A$, i.e., $P_A(x) = \argmin_{a \in A} \|x - a\|$, which is a singleton when $\dist(x,A) < \rho(A)$.
For two nonempty sets $C, D \subset \bbR^2$, let $\cH(C, D)$ denote their Hausdorff distance, defined as
\[
\cH(C, D) = \inf\{\eps > 0: C \subset B(D, \eps) \text{ and } D \subset B(C, \eps)\}.
\]
For a curve $C \subset \bbR^2$ and $x \in C$, $\vec{C}_x$ denotes the tangent subspace of $C$ at $x$ when it exists.
For two curves, $C$ and $D$, respectively differentiable almost everywhere and differentiable, and such that $\rho(D) \geq r$ and $C \subset B(D, r)$, define
the deviation angle of $C$ with respect to $D$ as
$$
\angle (C, D) = \sup_{x \in C} \angle\left( \vec{C}_x, \vec{D}_{P_D(x)} \right),
$$
where $ \angle( \vec{C}_x, \vec{D}_{P_D(x)} )\in [0,\pi/2]$ denotes the angle between the tangent spaces of $C$ and $D$ at $x$ and $P_D(x)$, respectively \citep{1479764}. Note that it is not symmetric in $C$ and $D$.

Where they appear, $\alpha$ and $r$ are fixed.
Everywhere in the proof, a constant only depends (at most) on $\alpha,r$ and the diameter of $S$.  We will leave this dependence implicit most of the time.

We let $n$ denote the sample size throughout.
We say that an event holds with high probability if it happens with probability at least $1 -Ae^{-n/A}$ for some constant $A > 0$.

\section{Some geometrical results} \label{sec:geom}

In this section we gather a few geometrical results that we will use later on in the paper.

\begin{lem} \label{lem:center-out}
Let $S\subset \bbR ^2$ such that $S$ and $S^\comp$ satisfy the $r$-rolling condition. Any ball of radius $\alpha>0$ with center in $S$ contains a ball of radius $\tfrac12 \min\{\alpha,r\}$ included in $S$.
\end{lem}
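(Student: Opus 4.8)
The plan is to set $\beta := \tfrac12\min\{\alpha,r\}$ and, writing $B(c,\alpha)$ for the given ball (so that $c\in S$), to exhibit a single ball $B(q,\beta)$ that is simultaneously contained in $S$ and in $B(c,\alpha)$. I would split the argument according to the distance $d:=\dist(c,\partial S)$. In the easy case $d\ge\beta$, the open ball $B(c,\beta)$ meets no point of $\partial S$, so being connected and containing the interior point $c$ it lies in $\operatorname{int}(S)\subset S$; and since $\beta<\alpha$ it is trivially contained in $B(c,\alpha)$. Taking $q=c$ settles this case.

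The substantive case is $d<\beta$. Since $\beta\le r/2<r\le\rho(\partial S)$ (the last inequality being the reach bound recorded above for sets satisfying the $r$-rolling condition), we have $d<\rho(\partial S)$, so the metric projection $x:=P_{\partial S}(c)$ is a single point, and $c$ lies on the inner normal at $x$, that is $c=x-d\,\eta_x$. I would then take $q=x-\beta\,\eta_x$ and invoke the inner rolling ball $B^{+}=B(x-r\eta_x,r)\subset S$ guaranteed by the hypothesis. Two triangle-inequality checks then close the argument. For containment in $S$: the center $x-r\eta_x$ of $B^{+}$ lies at distance $r-\beta$ from $q$, so every point of $B(q,\beta)$ is within $(r-\beta)+\beta=r$ of it, whence $B(q,\beta)\subset B^{+}\subset S$. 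For containment in $B(c,\alpha)$: because $c,q,x$ are collinear along $\eta_x$ we get $\|c-q\|=\beta-d$, so every point of $B(q,\beta)$ lies within $(\beta-d)+\beta=2\beta-d\le 2\beta\le\alpha$ of $c$, whence $B(q,\beta)\subset B(c,\alpha)$. Having both inclusions hold at once is exactly what forces the constant $\tfrac12$, through $\beta\le r/2$ and $2\beta\le\alpha$.

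The only genuinely delicate point is the collinearity claim $c=x-d\,\eta_x$, and it is the step I would spell out most carefully: the tangent ball $B(c,d)\subset S$ and the inner rolling ball $B^{+}$ are both contained in $S$ and tangent to $\partial S$ at $x$, which forces the outward normal $(x-c)/d$ of $B(c,d)$ at $x$ to coincide with $\eta_x$. Without this collinearity, only the crude bound $\|c-q\|\le\|c-x\|+\|x-q\|=d+\beta$ would be available, which would degrade the constant to $\beta\le\alpha/3$; the point is that the displacements from $x$ to $c$ and from $x$ to $q$ run along the same normal direction and partially cancel rather than add. Everything else is routine.
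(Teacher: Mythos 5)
Your proof is correct and follows essentially the same route as the paper's: project the center onto $\partial S$, invoke the inner rolling ball $B^{+}$ guaranteed by the $r$-rolling condition, and place a small ball tangent to the boundary along the inward normal at the projection point (your $B(q,\beta)$ is exactly the paper's ball of radius $\alpha/2$ tangent at $y$ when $\alpha\le r$). The only cosmetic difference is that you handle $\alpha\le r$ and $\alpha>r$ uniformly through $\beta=\tfrac12\min\{\alpha,r\}$, whereas the paper first reduces the case $\alpha>r$ to the case $\alpha\le r$.
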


\begin{proof}
Let $\Gamma$ be a shorthand for $\partial S$.
First, we will analyze the case $\alpha\leq r$.  If $z \in S$ satisfies $\dist(z, \Gamma) \geq \alpha$, then $B(z, \alpha) \subset S$.  Now, take $z \in S$ such that $\dist(z, \Gamma) < \alpha$ and let $y$ be the
metric projection of $z$ onto $\Gamma$, which is well-defined since $\dist(z,\Gamma)<\rho(\Gamma)$.  By the $r$-rolling property, there is an open ball $B$ of radius $r$ tangent to $\Gamma$ at $y$ that
contains $z$ and $B \subset S$.
Therefore  $B(z, \alpha) \cap B$ contains the ball of radius $\alpha/2$ tangent to $\Gamma$ at $y$ that contains $z$.  See \figref{center-out} for
an illustration. This concludes the proof for $\alpha\leq r$. If $\alpha>r$, the ball of radius $\alpha$ contains the ball of radius $r$ with same center.  By what we just did, that ball contains a ball of radius $r/2$ which belongs to $S$.
%
\begin{figure}[htbp]
\centering
\begin{center}
\psset{unit=1.5cm}

\begin{pspicture}(4,3.5)
\psaxes[labels=none,ticks=none,linestyle=dashed,linewidth=0.3\pslinewidth](2,1)(0,0)(4,3.5)
\uput[180](3.5,2.12){\Large $\Gamma$}
\parabola[linewidth=2\pslinewidth](0,3)(2,1)
\uput[0](2,3){$B$}
\pscircle(2,2){1}
\pscircle(2,1.3){0.7}
\pscircle(2,1.35){0.35}
\psdot(2,1.35)
\psdot(2,1)
\uput[135](2,1.35){$z$}
\uput[225](2,1){$y$}
\end{pspicture}
\end{center}
\caption{Illustrates the proof of \lemref{center-out}.  The thick, parabolic line represents a portion of $\Gamma = \partial S$.}
\label{fig:center-out}
\end{figure}
\end{proof}

Recall that $\mu$ denotes the Lebesgue measure on $\bbR^2$.
\begin{lem} \label{lem:ball-vol}
Let $S\subset \bbR ^2$ be measurable and such that $S$ and $S^\comp$ satisfy the $r$-rolling condition. For any $\alpha\leq r$, there is a
numeric constant $A > 0$ depending only on $\alpha$ such that, for any $z \notin S$,
\[
\mu(B(z, \alpha) \cap S) \geq A \, \max(0,\alpha -\dist(z, \partial S))^{3/2}. 
\]
\end{lem}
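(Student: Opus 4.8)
The plan is to reduce the bound to the area of intersection of two disks and then to estimate that area from below by the area of a single circular segment. Write $d = \dist(z, \partial S)$ and set $h = \alpha - d$. If $d \geq \alpha$ the right-hand side is $0$ and there is nothing to prove, so I would assume $0 < h \leq \alpha$. Since $d < \alpha \leq r \leq \rho(\partial S)$, the metric projection $y = P_{\partial S}(z)$ is well-defined, and because $z \notin S$ the point $z$ lies on the outer normal ray, i.e.\ $z = y + d\,\eta_y$. By the $r$-rolling condition there is an inscribed ball $B^+ = B(y - r\eta_y, r) \subset S$ tangent to $\partial S$ at $y$. Consequently $B(z,\alpha) \cap S \supseteq B(z,\alpha) \cap B^+$, so it suffices to lower bound the area of the lens $B(z,\alpha) \cap B^+$, the intersection of two disks of radii $\alpha$ and $r$ whose centers lie at distance $D = \|z - (y - r\eta_y)\| = d + r$.

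Next I would exploit that, placing the two centers on a line, the two circles meet along a chord that splits the lens into two circular segments, one cut from each disk; in particular the lens area is at least the area of the segment cut from the smaller disk $B(z,\alpha)$ (here $\alpha \le r$). An elementary computation gives the height of that segment as $\delta = \tfrac{h(2r - h)}{2D}$. Using $0 < h \leq \alpha \leq r$ together with $r \le D \le \alpha + r \le 2r$, one checks the two-sided comparison $h/4 \leq \delta \leq h \leq \alpha$. The area of a circular segment of height $\delta$ in a disk of radius $\alpha$ equals $\int_0^\delta 2\sqrt{2\alpha u - u^2}\,du$; since $\delta \leq \alpha$ forces $2\alpha - u \geq \alpha$ throughout the range of integration, this is at least $2\sqrt{\alpha}\int_0^\delta \sqrt{u}\,du = \tfrac{4}{3}\sqrt{\alpha}\,\delta^{3/2}$. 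Combining with $\delta \geq h/4$ then yields $\mu(B(z,\alpha)\cap S) \geq \tfrac{\sqrt{\alpha}}{6}\,h^{3/2}$, which is the claim with $A = \sqrt{\alpha}/6$, a constant depending only on $\alpha$.

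The only genuinely delicate point is the emergence of the exponent $3/2$, and it comes entirely from the thin-cap regime $h \to 0$: there the integrand $\sqrt{2\alpha u - u^2} \approx \sqrt{2\alpha}\,\sqrt{u}$ produces a $\delta^{3/2}$ contribution rather than a linear one, and one must verify that the cap height $\delta$ stays comparable to $h$ \emph{uniformly} over all admissible $z$. This uniformity is precisely where the hypotheses $\alpha \le r$ and $D \le 2r$ enter, through the estimates $2r - h \ge r$ and $D \le 2r$ that give $\delta \geq h/4$. Away from this regime, when $h$ is comparable to $\alpha$, the intersection area is bounded below by a positive constant while $h^{3/2} \le \alpha^{3/2}$ is bounded above, so the inequality is slack and no care is needed there.
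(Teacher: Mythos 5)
Your proof is correct and follows essentially the same route as the paper's: project $z$ onto $\partial S$, invoke the inscribed rolling ball to reduce to a lens of two disks, and extract the exponent $3/2$ from the area of a circular segment of height comparable to $h$. The only (cosmetic) differences are that you keep the full radius-$r$ ball and bound the asymmetric lens by the segment of the smaller disk, whereas the paper shrinks the inscribed ball to radius $\alpha$ to get a symmetric lens, and you bound the segment area via the Cartesian integral $\int_0^\delta 2\sqrt{2\alpha u-u^2}\,du$ rather than the angular parametrization.
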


\begin{proof}
Let $\Gamma$ be a shorthand for $\partial S$.
It suffices to consider $z \notin S$ such that $h= \alpha -\dist(z, \Gamma) > 0$.  Let $y$ be the metric projection of $z$ onto $\Gamma$, which is well-defined
since $\dist(z, \Gamma) < \alpha \leq \rho(\Gamma)$, and let $B$ be the open ball of radius $\alpha$ tangent to $\Gamma$ at $y$ and contained within $S$. It is clear
that $\mu(B(z, \alpha) \cap S) \geq \mu(B(z, \alpha) \cap B)$.  The intersection $B(z, \alpha) \cap B$ is the union of
two spherical caps symmetric with respect to line joining the two points at the intersection $\partial B(z, \alpha) \cap \partial B$.  See \figref{ball-vol} for an illustration. If $C$ denotes one of them, we therefore have $\mu(B(z, \alpha) \cap B) = 2 \mu(C)$, with $C$ a spherical cap of radius $\alpha$ and height $h$.  Its area is equal to
\[
\mu(C) = 2 \alpha^2 \int_0^{\acos(1-h/\alpha)} \sin^2(t) dt.
\]
Using the bound $\sin(t) \geq 2t/\pi$, valid for $t \in [0,\pi/2]$, and the bound $\acos(1-t) \geq \sqrt{2t}$, valid for $t \in [0,1]$, we obtain $2 \mu(C) \geq A \, h^{3/2}$ with $A = 32 \sqrt{2\alpha} / (3 \pi^2)$.
%
\begin{figure}[htbp]
\centering
\begin{center}
\psset{unit=1.5cm}

\begin{pspicture}(4,3.5)
\psaxes[labels=none,ticks=none,linestyle=dashed,linewidth=0.3\pslinewidth](2,1)(0,0)(4,3.5)
\uput[180](3.5,2.12){\Large $\Gamma$}
\parabola[linewidth=2\pslinewidth](0,3)(2,1)
\uput[0](2,1.7){$B$}
\pscircle(2,1.7){0.7}
\pscircle(2,0.7){0.7}
\psdot(2,1)
\psdot(2,0.7)
\uput[315](2,0.7){$z$}
\uput[225](2,1){$y$}
\end{pspicture}
\end{center}
\caption{Illustrates the proof of \lemref{ball-vol}.  The thick, parabolic line represents a portion of $\Gamma = \partial S$.  The intersection of the two balls is the region of interest.}
\label{fig:ball-vol}
\end{figure}
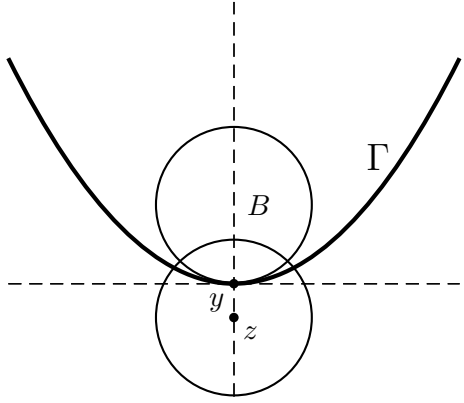
\end{proof}

For the following result, we use some heavy machinery from the seminal work of \cite{MR0110078}.
For a set $T \subset \bbR^2$, let $\cE(T)$ denote its Euler-Poincar\'e characteristic, and recall that $\lambda(T)$ denotes its length.

\begin{lem} \label{lem:length-bounded}
Suppose $S \subset \bbR^2$ is compact, with both $S$ and $S^\comp$ satisfying the $r$-rolling condition.
There are constants $A_0,A_1 > 0$ depending only on $r$ and $\diam(S)$ such that $|\cE(\partial S)|\leq A_0$ and $\lambda (\partial S) \leq A_1$.
\end{lem}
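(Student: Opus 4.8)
The plan is to obtain both bounds simultaneously from Federer's tube (Steiner) formula for sets of positive reach. Recall first that the two-sided rolling condition guarantees $\rho(\partial S) \ge r$ (see \cite[Lemma A.0.6]{bea-tesis}), so $\partial S$ is a compact set of positive reach and Federer's calculus \citep{MR0110078} applies to it. In particular $\partial S$ carries curvature measures $\Phi_0, \Phi_1, \Phi_2$, and for every $0 < \eps < \rho(\partial S)$ the area of the $\eps$-tube around $\partial S$ is the polynomial
\[
\mu(B(\partial S, \eps)) = \Phi_2(\partial S) + 2\,\Phi_1(\partial S)\,\eps + \pi\,\Phi_0(\partial S)\,\eps^2 .
\]
Since $\partial S$ is one-dimensional, $\Phi_2(\partial S) = \mu(\partial S) = 0$; moreover $\Phi_1(\partial S) = \lambda(\partial S)$, and by Federer's Gauss-Bonnet theorem $\Phi_0(\partial S) = \cE(\partial S)$. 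Hence for all $\eps \in (0,r)$,
\[
\mu(B(\partial S, \eps)) = 2\,\lambda(\partial S)\,\eps + \pi\,\cE(\partial S)\,\eps^2 . \qquad (\star)
\]

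Next I would control the left-hand side of $(\star)$ crudely from above. Because $S$ is compact we have $\partial S \subset S \subset \bar{B}(x_0, \diam(S))$ for any $x_0 \in S$, so $B(\partial S, \eps) \subset B(x_0, \diam(S)+\eps)$ and therefore $0 \le \mu(B(\partial S, \eps)) \le \pi(\diam(S)+\eps)^2$ for every $\eps > 0$. Evaluating $(\star)$ at two fixed scales, say $\eps = r/3$ and $\eps = 2r/3$, gives two linear relations in the unknowns $\lambda(\partial S)$ and $\cE(\partial S)$ whose coefficients are fixed multiples of $r$. Solving this $2\times 2$ system and inserting the two-sided volume bound then bounds both $\lambda(\partial S)$ and $|\cE(\partial S)|$ by explicit constants depending only on $r$ and $\diam(S)$, yielding $A_1$ and $A_0$ respectively.

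The only substantive point is $(\star)$ itself, namely the tube formula together with the identification of its coefficients: that the $\eps^1$-coefficient is twice the length and the $\eps^2$-coefficient is $\pi$ times the Euler-Poincar\'e characteristic. Both are part of Federer's curvature-measure machinery, and the validity of the polynomial expansion on the whole range $\eps \in (0,r)$ rests precisely on $\rho(\partial S) \ge r$, i.e.\ on the rolling hypothesis; this is where the assumption enters. Once $(\star)$ is granted everything else is elementary linear algebra. As a cross-check for the length bound alone one can bypass the curvature measures: for $\eps < \rho(\partial S)$ the nearest-point projection onto $\partial S$ is single-valued, so the normal map $(y,t)\mapsto y+t\,\eta$ is injective on $\partial S \times (-\eps,\eps)$ with Jacobian $1 - t\kappa \ge 1 - \eps/r > 0$, whence $\mu(B(\partial S,\eps)) \ge 2\eps(1-\eps/r)\,\lambda(\partial S)$, and the upper volume bound again forces $\lambda(\partial S) \le A_1$; likewise $|\cE(\partial S)|$ can be controlled by observing that each boundary component is a closed $C^{1,1}$ curve of curvature at most $1/r$, hence of length at least $2\pi r$ by Fenchel's theorem, so $\partial S$ has at most $A_1/(2\pi r)$ components.
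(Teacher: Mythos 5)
Your proposal is correct, and it reaches the conclusion by a somewhat different route than the paper. The paper's proof is very short: it invokes Federer's Remark~5.10, which directly asserts that $\sup\{|\Phi_k|(T) : T \subset \bar B(0,d),\ \rho(T)\ge r\}<\infty$, i.e.\ a ready-made uniform bound on the total variations of the curvature measures over all sets of reach at least $r$ inside a fixed ball; the lemma then follows immediately from the identifications $\Phi_1(\partial S)=\lambda(\partial S)$ (Rem.~6.14) and $\Phi_0(\partial S)=\cE(\partial S)$ (Th.~5.19), since $|\Phi_0(\partial S)|\le|\Phi_0|(\partial S)$. You instead take Federer's Steiner/tube formula (Th.~5.6) as the starting point, note $\Phi_2(\partial S)=\mu(\partial S)=0$, bound the tube volume trivially by $\pi(\diam(S)+\eps)^2$, and extract $\lambda(\partial S)$ and $\cE(\partial S)$ by evaluating the resulting quadratic polynomial at two radii in $(0,r)$ and inverting the $2\times 2$ system (whose determinant, $4\pi r^3/27$ for your choice of radii, is a nonzero function of $r$ alone). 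This is sound: the only inputs are the tube formula and the same two coefficient identifications the paper uses, and in fact the paper deploys exactly this tube formula in the very next lemma (\lemref{net}), so your argument is arguably more self-contained, trading the citation of Remark~5.10 for a little linear algebra. Two cosmetic remarks: the paper actually bounds the total variation $|\Phi_0|(\partial S)$, which is stronger than the bound on $|\cE(\partial S)|$ that the statement requests and that you obtain; and in your Fenchel cross-check, note that each boundary component is homeomorphic to a circle, so $\cE(\partial S)=0$ outright --- bounding the number of components is more than is needed there.
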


\begin{proof}
Let $\Gamma=\partial S$ and $d = \diam(S)$, and assume, without loss of generality, that $S \subset \bar{B}(0, d)$. For a given $T$
such that $\rho(T)\geq r$, let $\Phi_k$ denote the $k$th curvature measure associated with $T$, $k \in \{0,1,2\},$ as defined in \cite[Def.~5.7]{MR0110078}.
In \cite[Rem.~5.10]{MR0110078} we find that
\beq \label{proof-length-bounded1}
\sup \{|\Phi_k|(T) : T \subset \bar{B}(0, d), \, \rho(T) \ge r \} < \infty,
\eeq
where $|\Phi_k|(T)$ is the total variation of $\Phi_k$ over $T$.
Now, by \cite[Rem.~6.14]{MR0110078}, $\Phi_1(\Gamma)$ coincides with the one-dimensional Hausdorff measure, so that $|\Phi_1|(\Gamma) = \Phi_1(\Gamma) = \lambda(\Gamma)$.
From this, we deduce the existence
of $A_1$. By \cite[Th.~5.19]{MR0110078}, $\Phi_0(\Gamma)$ coincides with $\cE(\Gamma)$ and, by (\ref{proof-length-bounded1}) for $k=0$, we get that there is some constant $A_0$ such that $|\Phi_0(\Gamma)|\leq |\Phi_0|(\Gamma)\leq A_0$.
\end{proof}

We define an $\eps$-net of a set $S$ as any subset of points $x_1, \dots x_m \in S$ such that $\|x_j -x_k\| \geq \eps$ when $j \neq k$, and that, for any $x \in S$, $\|x -x_j\| < \eps$ for some $j =1, \dots, m$.  Note
that any bounded set $S \subset \bbR^2$ admits an $\eps$-net of finite cardinality.

\begin{lem} \label{lem:net}
For any bounded $S\subset \bbR^2$, there is a constant $A$ depending only on $\diam(S)$ such that, for any $0 < \eps < \diam(S)$, any $\eps$-net for $S$ has cardinality bounded by $A \eps^{-2}$.  If, in addition, both $S$ and $S^\comp$ satisfy the $r$-rolling condition, then there is a constant $A'$ depending only on $r$ and $\diam(S)$  such that any $\eps$-net for $\partial S$ has cardinality bounded by $A' \eps^{-1}$.
\end{lem}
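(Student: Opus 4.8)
For the first bound I would use a volume-packing argument. By definition the points $x_1,\dots,x_m$ of an $\eps$-net are $\eps$-separated, so the open balls $B(x_j,\eps/2)$ are pairwise disjoint. Since every $x_j$ lies within distance $\diam(S)$ of $x_1$, all these balls are contained in $B(x_1,\diam(S)+\eps/2)$, which, using $\eps<\diam(S)$, is contained in a disc of radius $\tfrac32\diam(S)$. Comparing the total area $m\cdot\pi(\eps/2)^2$ of the disjoint balls with the area $\pi(\tfrac32\diam(S))^2$ of the enclosing disc yields $m\le 9\diam(S)^2\eps^{-2}$, so $A=9\diam(S)^2$ works.

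For the second bound, the improvement from $\eps^{-2}$ to $\eps^{-1}$ reflects the one-dimensionality of $\partial S$. The plan is again to work with the disjoint balls $B(x_j,\eps/2)$, but now to charge each one a definite amount of \emph{length} of $\partial S$ instead of area. Concretely, I would show that, for $\eps$ small enough, $\lambda(\partial S\cap B(x_j,\eps/2))\ge \eps/2$ for every $j$; summing over $j$, using disjointness together with the bound $\lambda(\partial S)\le A_1$ from \lemref{length-bounded}, gives $m\cdot \eps/2\le A_1$, i.e.\ $m\le 2A_1\eps^{-1}$.

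The heart of the matter is this per-ball length bound, and here the rolling condition enters. Since $\partial S$ has reach at least $r$, it is a finite disjoint union of $C^{1,1}$ Jordan curves; let $C$ be the component through $x_j$, bounding a region $U$. Near $C$ one side is interior to $S$ and the other to $S^\comp$, and the corresponding rolling ball of radius $r$ is an open subset of $S$ (resp.\ $S^\comp$) tangent to $C$ from inside $U$. As it cannot meet $\partial U=C\subset\partial S$, connectedness forces it to lie inside $U$, whence $\diam(C)=\diam(\bar U)\ge 2r$ (the diameter of a compact planar set is attained on its boundary). Consequently, as soon as $\eps<2r$, the component $C$ cannot fit inside $B(x_j,\eps/2)$, so the arc of $\partial S$ emanating from $x_j$ must reach $\partial B(x_j,\eps/2)$; the sub-arc up to its first exit point has endpoints at Euclidean distance $\eps/2$, hence length at least $\eps/2$, proving the claim. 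For $\eps\in[2r,\diam(S))$ I would simply observe that an $\eps$-net is in particular an $r$-separated set, so its cardinality is controlled by the $\eps=r$ case, namely $2A_1/r\le(2A_1\diam(S)/r)\eps^{-1}$; taking $A'=\max\{2A_1,\,2A_1\diam(S)/r\}$, which depends only on $r$ and $\diam(S)$, completes the proof.

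The step I expect to be the main obstacle is precisely the per-ball length lower bound: one must rule out the degenerate scenario in which an entire tiny component of $\partial S$ hides inside $B(x_j,\eps/2)$ and contributes negligible length. The estimate $\diam(C)\ge 2r$, obtained by trapping a rolling ball inside the region enclosed by $C$, is what forecloses this possibility and makes the length-charging argument go through.
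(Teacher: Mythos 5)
Your proof is correct, and while the first (area-packing) half is essentially the paper's argument, your second half takes a genuinely different route. The paper also starts from the disjoint balls $B(y_j,\eps/2)$, but then packs them into the tubular neighborhood $B(\partial S,\eps/2)$ and bounds its area by Federer's tube formula, $\mu(B(\partial S,\eps/2))=\eps\,\Phi_1(\partial S)+\tfrac{\pi}{4}\eps^2\Phi_0(\partial S)\le A_2\eps$, using both the length bound and the Euler-characteristic bound from \lemref{length-bounded}. You instead charge each ball a length $\ge\eps/2$ of $\partial S$ directly, which requires the extra geometric input that every connected component of $\partial S$ has diameter at least $2r$ (obtained by trapping a rolling ball inside the region it encloses) together with the fact that $\partial S$ is a finite union of Jordan curves --- a fact the paper itself invokes, via Walther, in the proof of \prpref{polygon}, so it is fair game here. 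Your route is more elementary in that it avoids the curvature-measure machinery, at the cost of the component-diameter lemma and the $\eps\ge 2r$ case split; the paper's route is shorter given that Federer's framework is already deployed for \lemref{length-bounded}. One small simplification available to you: rather than following the arc to its first exit point, you can note that the component through $x_j$ is connected, contains $x_j$, and is not contained in $B(x_j,\eps/2)$, so it meets every circle $\partial B(x_j,s)$ for $s\in(0,\eps/2)$; since the radial distance map is $1$-Lipschitz, $\lambda(\partial S\cap B(x_j,\eps/2))\ge\eps/2$ follows without needing a parametrized arc. Both arguments yield the same $A'\eps^{-1}$ bound with constants depending only on $r$ and $\diam(S)$.
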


\begin{proof}
Assume without loss of generality that $S\subset \bar{B}(0,d)$ where $d = \diam(S)$.
Let $x_1,\ldots,x_m$ be an $\eps$-net of $S$.
Since $B(x_j, \eps/2) \cap B(x_k, \eps/2)=\emptyset$ when $j \neq k$, we have
\[
\pi d^2\geq \sum_{j=1}^m \mu(\bar{B}(0,d) \cap B(x_j, \eps/2)) \geq m \pi (\eps/4)^2,
\]
using \lemref{center-out} in the last inequality.  We therefore have $m \leq 16 d^2/\eps^{2}$.  This proves the first part.

For the second part, let $\Gamma=\partial S$.
It is enough to show the results for $\eps \leq 2r$.
Note that $2r \le d$ by the $r$-rolling condition on $S$.
Let $y_1,\ldots,y_{m'}$ be an $\eps$-net of $\Gamma$. Since $B(y_j, \eps/2) \cap B(y_k, \eps/2) = \emptyset$ when $j \neq k$, we have
\beq \label{proof-net1}
m' \pi\left(\tfrac{\epsilon}{2}\right)^2 =\mu\left(\cup_{j=1}^{m'} B\left(y_j,\tfrac{\eps}{2}\right)\right)\leq \mu \left(B\left(\Gamma,\tfrac{\eps}{2}\right)\right).
\eeq
By \cite[Th.~5.6]{MR0110078}, we have
\[
\mu (B(\Gamma,\eps/2))=\eps \Phi_{1}(\Gamma)+\frac \pi 4 \eps^2\Phi_0(\Gamma),
\]
where $\Phi_1(\Gamma)=\lambda(\Gamma)$ \cite[Rem.~6.14]{MR0110078} and $\Phi_0(\Gamma)$ is the Euler-Poincar\'{e} characteristic of $\Gamma$ \cite[Th.~5.19]{MR0110078}.
By Lemma \ref{lem:length-bounded}, there are positive constants $A_0, A_1$ depending only on $r$ and $d$ such that $\lambda(\Gamma) \le A_1$ and $|\Phi_0(\Gamma)| \le A_0$, yielding
\[
\mu (B(\Gamma,\eps/2)) \le A_1 \eps + A_0 \frac\pi 4 \eps^2 \le A_2 \eps,
\]
where $A_2 = A_1 + A_0 (\pi/4) d$, using the fact that $\eps \le d$.  Plugging this into \eqref{proof-net1}, we conclude the proof of the second part.
\end{proof}

Next, we establish some basic properties of a line segment joining two points on a circle which barely intersects a set with smooth boundary.

\begin{lem} \label{lem:delta}
Let $S\subset \bbR^2$ be such that both $S$ and $S^\comp$ satisfy the $r$-rolling condition.  Fix $\alpha\in (0,r)$ and $0<t \leq \min\{\alpha, 2\alpha^2/r\}$. There
is a constant $A > 0$ depending only on $(r,\alpha)$ such
that, for any $z \notin S$ with $0<\alpha -\dist(z, S)\leq t/A$ and any $x_1, x_2 \in \partial B(z, \alpha) \cap S$, we have
\begin{eqnarray}
[x_1 x_2] & \subset & B(\partial S, t), \label{partial-dist} \\
\|x_1 -x_2\| & \leq & \sqrt{t}, \label{pair-dist} \\
\angle( [x_1 x_2], \partial S ) & \leq & \sqrt{t}. \label{angle}
\end{eqnarray}
(The angle in \eqref{angle} is well defined because of \eqref{partial-dist} and the bound $t \le \alpha <r$.)
\end{lem}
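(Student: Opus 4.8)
The plan is to reduce everything to a local computation in coordinates adapted to the metric projection $y = P_\Gamma(z)$ of $z$ onto $\Gamma := \partial S$, which is well-defined since $\dist(z,\Gamma) < \alpha < r \le \rho(\Gamma)$. Because $z \notin S$ and $S$ is closed, $\dist(z,S) = \dist(z,\Gamma)$, so writing $h := \alpha - \dist(z,S) \in (0, t/A]$, I place $y$ at the origin with outward normal $\eta_y = (0,1)$, so that $z = (0, \alpha - h)$. The two rolling balls at $y$ are then $B^+ = B((0,-r), r) \subset S$ and $B^- = B((0,r), r) \subset S^\comp$, and these will drive all the estimates.

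First I would localize the set $\partial B(z,\alpha) \cap S$. For $x = (x_1, x_2)$ in this set, the sphere identity $\|x - z\|^2 = \alpha^2$ gives $x_1^2 + x_2^2 = 2(\alpha - h)x_2 + (2\alpha - h)h$, while $x \in S$ forces $x \notin B^-$, i.e. $x_1^2 + x_2^2 \ge 2 r x_2$. Combining the two yields $x_2 \le \frac{(2\alpha - h)h}{2(r - \alpha + h)} \le \frac{\alpha h}{r - \alpha}$, and since the lowest point of $\partial B(z,\alpha)$ sits at height $-h$ we also have $x_2 \ge -h$. Feeding these bounds back into the sphere identity bounds $\|x - y\|^2 = x_1^2 + x_2^2 \le \frac{2\alpha r}{r - \alpha}\,h$, so that every such $x$ satisfies $|x_1| \le C_2\sqrt h$ and $|x_2| \le C_1 h$ with $C_2 = \sqrt{2\alpha r/(r-\alpha)}$ and $C_1 = \max(1, \alpha/(r-\alpha))$. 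Claim \eqref{pair-dist} is then immediate: $\|x_1 - x_2\| \le \|x_1 - y\| + \|x_2 - y\| \le 2C_2\sqrt h \le \sqrt t$ once $A \ge 4 C_2^2$.

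For \eqref{partial-dist} I would use that, because $\rho(\Gamma) \ge r$, the two circles of radius $r$ tangent to $\Gamma$ at $y$ sandwich $\Gamma$ near the origin, so on an interval $\{|x_1| \le r/2\}$ the boundary is a graph $x_2 = \gamma(x_1)$ with $\gamma(0) = \gamma'(0) = 0$ and $|\gamma(x_1)| \le x_1^2/r$. A point $w = (w_1, w_2) \in [x_1 x_2]$ inherits $|w_1| \le C_2\sqrt h$ and $|w_2| \le C_1 h$ by convexity (taking $A$ large enough that $C_2\sqrt h \le r/2$), whence $\dist(w,\Gamma) \le |w_2 - \gamma(w_1)| \le |w_2| + |\gamma(w_1)| \le C_1 h + C_2^2 h/r \le t$ once $A$ exceeds $C_1 + C_2^2/r$.

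Claim \eqref{angle} is the delicate one, and is where the main obstacle lies: a crude estimate of the chord slope as $|x_{2,2} - x_{1,2}|/|x_{2,1} - x_{1,1}|$ is useless when the two points are horizontally close. The resolution is to note that, since $|x_{i,2}| \le C_1 h < \alpha - h$, both points lie on the lower hemisphere of $\partial B(z,\alpha)$, which is the graph $x_2 = f(x_1)$ of a convex function with $f'(u) = u/\sqrt{\alpha^2 - u^2}$. By the mean value theorem the slope of the chord $[x_1 x_2]$ equals $f'(\xi)$ for some $|\xi| \le C_2\sqrt h$, hence is at most $C_2\sqrt h / \sqrt{\alpha^2 - C_2^2 h} \le C\sqrt h$; thus $[x_1 x_2]$ makes an angle $O(\sqrt h)$ with $\vec\Gamma_y$. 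Separately, the curvature bound $\le 1/r$ coming from $\rho(\Gamma) \ge r$ shows that for any $w \in [x_1 x_2]$ the projection $P_\Gamma(w)$ lies within $O(\sqrt h)$ of $y$, so $\vec\Gamma_{P_\Gamma(w)}$ deviates from $\vec\Gamma_y$ by $O(\sqrt h)$ as well. Combining the two by the triangle inequality for angles gives $\angle([x_1 x_2], \Gamma) \le C_4\sqrt h \le \sqrt t$ once $A \ge C_4^2$. Finally I would take $A$ to be the largest of the finitely many thresholds on the constant accumulated along the way (all depending only on $\alpha$ and $r$), and check that the standing assumption $t \le \min\{\alpha, 2\alpha^2/r\}$ keeps $h$ small enough for each local estimate above to be legitimate.
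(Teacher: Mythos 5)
Your argument is correct and follows essentially the same route as the paper's proof: the same coordinates centered at $y=P_{\partial S}(z)$, the same localization of $\partial B(z,\alpha)\cap S$ by playing the circle equation against the outer rolling ball $B^-$ (yielding vertical extent $O(h)$ and horizontal extent $O(\sqrt h)$), and the same split of the deviation angle into the chord-versus-tangent-at-$y$ term and the variation of the tangent along $\partial S$ controlled by the reach. The only differences are cosmetic --- you bound the chord slope by the mean value theorem on the lower hemisphere rather than by $\sin\theta_1\le a/\alpha$, and you bound $\|P_{\partial S}(w)-y\|$ by a direct triangle inequality instead of invoking Federer's Lipschitz bound for the metric projection --- so there is nothing further to flag.
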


\begin{proof}
Let $\Gamma$ be a shorthand for $\partial S$.
Define $\delta= \alpha -\dist(z, S)$, and let $e_1, e_2$ denote the canonical basis vectors of $\bbR^2$.
Since $p=\dist(z, \Gamma) = \dist(z, S) =\alpha-\delta< r$, $y= P_\Gamma(z)$ is well-defined.  Without loss of generality, assume that $y$ is the
origin and that the tangent of $\Gamma$ at $y$ is the line spanned by $e_1$.  Note that
the line $(y z)$ is perpendicular to the tangent at $y$, so that $z$ is on the line defined by $e_2$ and without loss
of generality we assume $z = -p e_2$.  Let $B$ be a shorthand for $B(z, \alpha)$ and let $B^+$ (resp.~$B^-$) be the
open ball centered at $r e_2$ (resp. $-r e_2$) with radius $r$.  Since $S$ and $S^\comp$ satisfy the $r$-rolling
condition, $B^+ \subset S$ and $B^- \subset S^\comp$.  Let $x^*=\delta e_2$. By construction
$x^*$ belongs to $(y z) \cap \partial B \cap B^+$.
See \figref{delta} for an illustration.
\begin{figure}[htbp]
\label{fig:delta}
\centering
\begin{center}
\psset{unit=2cm}

\begin{pspicture}(4,4.25)
\psaxes[labels=none,ticks=none,linestyle=dashed,linewidth=0.3\pslinewidth](2,2)(0,0)(4,4)
\uput[180](3.5,3.12){\Large $\Gamma$}
\parabola[linewidth=2\pslinewidth](0,4)(2,2)
\uput[135](2,4){$B^{+}$}
\pscircle(2,3){1}
\pscircle(2,1){1}
\pscircle(2,1.7){0.7}
\psdot(2,1.7)
\psdot(2,2)
\psdot(2,2.4)
\psdot(2.216312, 2.36574)
\uput[90](2.216312, 2.36574){$x_2$}
 \psdot(1.505025,2.194975)
 \uput[90](1.505025,2.194975){$x_1$}
 \psline(1.505025,2.194975)(2.216312, 2.36574)
\uput[135](2,2.4){$x^{*}$}
\uput[225](2,1.7){$z$}
\uput[45](2,2){$y$}
\uput[45](2,0){$B^{-}$}
\end{pspicture}
\end{center} 
\caption{Illustrates the proof of \lemref{delta}.  The thick, parabolic line represents a portion of $\Gamma = \partial S$.}
\end{figure}

For any point $x \in B\cap S$,
$$
\dist(x, \Gamma) = \dist(x, S^\comp) \leq \dist(x, B^-) \leq \dist(x^*, B^-) = \delta.
$$
Direct calculations show that $\partial B \cap \partial B^-$ is given by the points $\pm a e_1 - b e_2$, where

\begin{equation*}
\left\{\begin{array}{l}a^2+(r-b)^2=r^2,\\ a^2+(p-b)^2=\alpha^2.\end{array}\right.
\end{equation*}
So, using the fact that $p = \alpha - \delta$, we have
\begin{equation}\label{bformula}
0<b=\frac{\alpha^2-p^2}{2(r-p)}=\frac{(\alpha-p)(\alpha+p)}{2(r-p)}\leq \frac{\alpha \delta}{r-\alpha}.
\end{equation}

To prove \eqref{partial-dist}, take $x \in [x_1 x_2]$.  If $x \in S$, then $x \in B \cap S$ and we saw that $\dist(x, \Gamma) \leq \delta$.  If $x \notin S$, let $C$ be the closure of the intersection of $B$ with the half-plane above the line $\bbR e_1 - b e_2$.
Since $B \cap C^\comp \subset B^-$ and $B^- \cap S = \emptyset$, necessarily $x_1, x_2 \in C$, which in turn implies that $[x_1x_2] \subset C$ since $C$ is convex.
In particular, $x \in C$, so that $\dist(x, [-a e_1, a e_1]) \le \max\{b, \delta\}$.
And since $\dist([-a e_1, a e_1], B^+) \le b$ (by symmetry), we conclude with the triangle inequality that
\beq \label{delta-proof1}
\dist(x, \Gamma) = \dist(x, S) \leq \dist(x, B^+) \leq 2 \max\{b, \delta\} \leq A_1 \delta,
\eeq
for $A_1= 2 \max\{\alpha/(r-\alpha), 1\}$. This is valid for any $x \in [x_1 x_2]$, and proves \eqref{partial-dist} for any $A \ge A_1$.

To prove \eqref{pair-dist}, we use the fact that $x_1, x_2 \in B \cap S \subset B \setminus B^-$, so that $\|x_1-x_2\|\leq \diam(B \setminus B^-)$, and $\diam(B \setminus B^-)=2a$ when $b\leq p$, which is the case since our assumptions that $\delta \le t/A$ and $t \le 2 \alpha^2/r$ imply $\delta \le (r-\alpha) \alpha/r$, which forces $b \le p$ by \eqref{bformula}.
Continuing, we then have
$$
a^2=r^2-(r-b)^2=b(2r-b)\leq 2 b r \le A_1 r \delta,
$$
by \eqref{delta-proof1}.
From this we get
\beq \label{A2}
\|x_1-x_2\|\leq \diam(B \setminus B^-) = 2a\leq \sqrt{A_2\delta},
\eeq
where $A_2= 4 A_1 r$.
This proves \eqref{pair-dist} for any $A \ge \max\{A_1,A_2\}$.

We turn to proving \eqref{angle}.
We first note that $\angle( [x_1 x_2], \Gamma )$ is well-defined. Indeed, by assumption $\delta\leq t/A$, with $A \ge A_1 \ge 1$, and $t \leq \alpha$, so that $B([x_1x_2], \Gamma) \le \alpha$ by \eqref{partial-dist}, and we conclude with the fact that $\rho(\Gamma)\geq r>\alpha$.
For any $x\in [x_1 x_2]$ we can therefore compute the point $y'=P_{\Gamma}(x)$. Using the triangle inequality for angles, we have
\begin{equation}\label{bound-angle}
\angle( [x_1 x_2], \vec{\Gamma}_{y'} )\leq \angle( [x_1 x_2], \vec{\Gamma}_{y} )+\angle( \vec{\Gamma}_{y},\vec{\Gamma}_{y'}   )=\theta_1+\theta_2.
\end{equation}
We first bound $\theta_1$.  Direct trigonometric calculations show that
$$
\sin(\theta_1)\leq \frac{a}{\alpha}\leq \frac{\sqrt{A_2\delta}}{2\alpha},
$$
where the last inequality comes from \eqref{A2}.
We use the fact that $\sin(\theta) \ge 2 \theta/\pi$ for all $\theta \in [0,\pi/2]$, we get $\theta_1\le A_3\sqrt{\delta}$, where $A_3=\pi \sqrt{A_2}/(4\alpha)$.
It remains to bound $\theta_2$ in \eqref{bound-angle}. We have $y=P_{\Gamma}(x^*)$ and $y'=P_{\Gamma}(x)$, and $\dist(x^*,\Gamma)=\delta<\alpha$ by construction, and
also $\dist(x,\Gamma)\leq t\leq \alpha$ because of \eqref{partial-dist}.  Hence, by \cite[Th.~4.8(8)]{MR0110078}, we get
$$
\|y-y'\|\leq \frac{r}{r-\alpha}\|x-x^*\|.
$$
Using the fact that $x, x^* \in B \setminus B^-$, and then \eqref{A2}, we have
$
\|x-x^*\|\leq \sqrt{A_{2}\delta}.
$
Now, if we denote by $\vec{\eta}_y$ and $\vec{\eta}_{y'}$ the outward
pointing unit normal vector of $\Gamma$ at $y$ and $y'$ respectively, \cite[Th.~1]{walther97} ensures that
$$
\|\vec{\eta}_y-\vec{\eta}_{y'}\|\leq \frac{1}{r}\|y-y'\|.
$$
Since $\<\vec{\eta}_{y}, \vec{\eta}_{y'}\> = \<\vec{\Gamma}_{y}, \vec{\Gamma}_{y'}\> = \cos \theta_2$, we get
$$
\|\vec{\eta}_y-\vec{\eta}_{y'}\| = \sqrt{2 -2\cos \theta_2} = 2 \sin (\theta_2/2).
$$
We arrive at
$$\sin (\theta_2/2) \leq \frac{\sqrt{A_2\delta}}{2 (r-\alpha)}.$$
As before, this implies that $\theta_2 \le A_4\sqrt{\delta}$, where $A_4=\pi \sqrt{A_2}/(4(r-\alpha))$.
We conclude that
$$
\angle( [x_1 x_2], \vec{\Gamma}_{y'} )\leq (A_3+A_4)\sqrt{\delta}=\sqrt{A_5\delta},
$$
which proves \eqref{angle} for any $A \ge \max\{A_1,A_2,A_5\}$.
\end{proof}

The following is a technical result involving two line segments, one on each of two intersecting circles of same radius, and a line passing through these line segments.
\begin{lem} \label{lem:two-edges}
Let $x_0, x_0' \in \bbR^2$ such that $0 < \|x_0 - x_0'\| < 2 \alpha$, and
let $x_1, x_2 \in \partial B(x_0, \alpha) \setminus B(x'_0, \alpha)$ and $x_1', x_2' \in \partial B(x'_0, \alpha) \setminus B(x_0, \alpha)$.  Let $L$ be any line intersecting both $[x_1 x_2]$ and $[x_1' x_2']$.
Then there is a constant $A > 0$ depending only on $\alpha$ such that
$$
\max\Big\{\angle ((x_1 x_2), L), \, \angle ((x_1' x_2'), L)\Big\}\leq A \Big(\|x_0 - x'_0\| + \max_{i,j \in\{1,2\}} \|x_i - x_j'\| \Big).
$$
\end{lem}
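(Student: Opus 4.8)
The plan is to choose coordinates adapted to the two circles, record that the two segments sit on opposite sides of the radical axis of the two balls, and then read off the angle bound from the geometry of a line that must cross the strip between them.

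\textbf{Setup and reduction.} Since every angle is at most $\pi/2$, the claim is trivial as soon as $\eps := \|x_0-x_0'\| + \max_{i,j}\|x_i-x_j'\|$ exceeds a fixed multiple of $1/A$, so I may assume $\eps$ is as small as I like relative to $\alpha$. After a translation and rotation I put $x_0=(-c,0)$ and $x_0'=(c,0)$ with $2c=\|x_0-x_0'\|\in(0,2\alpha)$, so the radical axis of $B(x_0,\alpha)$ and $B(x_0',\alpha)$ is the $y$-axis. For $w\in\partial B(x_0,\alpha)$ one has $\|w-x_0'\|^2-\|w-x_0\|^2=-4c\,w_x$, so $w\notin B(x_0',\alpha)$ forces the first coordinate of $w$ to be $\le 0$; symmetrically the primed points have first coordinate $\ge 0$. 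Thus $[x_1x_2]$ and $[x_1'x_2']$ lie in the opposite closed half-planes $\{x\le 0\}$ and $\{x\ge 0\}$, meeting the $y$-axis only at the circle-intersection points $(0,\pm\sqrt{\alpha^2-c^2})$.

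\textbf{The two chords are short and nearly parallel.} From $\|x_i-x_j'\|\le\eps$ and the triangle inequality, $\|x_1-x_2\|\le 2\eps$ and $\|x_1'-x_2'\|\le 2\eps$, and all four points lie within $\eps$ of one another. Writing $m,m'$ for the chord midpoints, $m-x_0$ is orthogonal to $[x_1x_2]$ with $\|m-x_0\|=\sqrt{\alpha^2-\|x_1-x_2\|^2/4}\ge\sqrt{\alpha^2-\eps^2}$, and similarly for $m'-x_0'$, so both lengths are bounded below by a constant depending on $\alpha$ once $\eps$ is small. Since $\|(m-x_0)-(m'-x_0')\|\le\|m-m'\|+\|x_0-x_0'\|=O(\eps)$, the corresponding unit normals satisfy $\|n-n'\|=O(\eps)$, hence the two chord directions differ by an angle $O(\eps)$; moreover each chord direction is within $O(\eps)$ of the common tangent direction $\tau$ to the circles at the nearby intersection point, because a chord of length $O(\eps)$ makes angle $O(\eps)$ with the tangent at an endpoint while the two tangent directions at that intersection point differ only by the crossing angle $O(\|x_0-x_0'\|)$.

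\textbf{Controlling $L$.} Because $L$ meets $[x_1x_2]$, its endpoints lie in opposite closed half-planes bounded by $L$; writing $v$ for a unit normal of $L$, the scalars $\langle x_1,v\rangle,\langle x_2,v\rangle$ bracket the level of $L$, and likewise $\langle x_1',v\rangle,\langle x_2',v\rangle$. Comparing the two brackets gives $|\langle x_2-x_1,v\rangle|\le 2\max_{i,j}\|x_i-x_j'\|\le 2\eps$, whence $\sin\angle((x_1x_2),L)=|\langle x_2-x_1,v\rangle|/\|x_1-x_2\|$. To turn this into the stated bound I would invoke the half-plane separation from the setup: since $[x_1x_2]\subset\{x\le 0\}$ and $[x_1'x_2']\subset\{x\ge 0\}$, a line meeting both must cross the $y$-axis between them, which pins the direction of $L$ to within $O(\eps)$ of $\tau$; combined with the previous paragraph this yields $\max\{\angle((x_1x_2),L),\angle((x_1'x_2'),L)\}=O(\eps)$, as asserted.

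\textbf{Main obstacle.} The delicate point is precisely the control of the direction of $L$. The identity $\sin\angle=|\langle x_2-x_1,v\rangle|/\|x_1-x_2\|$ degrades when the chord is very short, so the endpoint-bracketing cannot carry the argument by itself; one must genuinely use that the two segments are separated by the radical axis, so that any line meeting both is transversal to the strip between them and therefore nearly parallel to $\tau$. Making this quantitative—ruling out a line that is nearly perpendicular to the chords yet still grazes both segments near the axis—is where the strictness $0<\|x_0-x_0'\|$ and the exclusion of each segment from the opposite ball must enter, and is the step I expect to require the most care.
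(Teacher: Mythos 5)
Your setup and your analysis of the two chords are sound: the reduction to small $\eps$, the radical-axis computation showing $[x_1x_2]\subset\{x\le 0\}$ and $[x_1'x_2']\subset\{x\ge 0\}$, and the midpoint/normal argument giving $\angle((x_1x_2),(x_1'x_2'))=O(\eps)$ are all correct (you should, however, also justify the phrase ``the nearby intersection point'': that the four points lie within $O(\eps)$ of one of the two points of $\partial B(x_0,\alpha)\cap\partial B(x_0',\alpha)$ does follow from the separation together with $\|x_i-x_j'\|\le\eps$, but you never prove it). The genuine gap is exactly the step you flag at the end: the bound on the direction of $L$ is asserted, not proved, and the mechanism you propose for it cannot work. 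Knowing that $L$ crosses the radical axis between the two segments constrains a \emph{point} of $L$, not its slope; a line through any prescribed point can be perpendicular to the chords. Indeed, if both segments contain a common point of $\partial B(x_0,\alpha)\cap\partial B(x_0',\alpha)$ (which is compatible with all your constraints and with $\eps$ arbitrarily small), then \emph{every} line through that point meets both segments, so no control on the direction of $L$ can be extracted from the half-plane separation alone. The endpoint-bracketing estimate $|\langle x_2-x_1,v\rangle|\le 2\eps$ is likewise no stronger than $\|x_2-x_1\|\le 2\eps$ and, as you yourself observe, degenerates for short chords. So the one inequality the lemma is really about is missing.

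For comparison, the paper works in the half-plane $H$ bounded by the line $T=(x_0x_0')$ through the two centers (perpendicular to your radical axis). It first shows that all four points lie in the same half-plane $H$ and within distance $\sqrt2\,a$ of the point $y\in H\cap\partial B(x_0,\alpha)$ farthest from $T$; it then bounds $\angle((x_1x_2),T)$ and $\angle((x_1'x_2'),T)$ by the maximal angle between $T$ and a tangent to the circle along the arcs $\overarc{x_1 x_2}$ and $\overarc{x_1' x_2'}$ (a convexity argument), and bounds $\angle(L,T)$ by the same quantity, using that $L$ meets both chords inside $H$ and that the chords are contained in the two convex disks; the triangle inequality then finishes. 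To repair your proof you need an argument of this kind: exploit that $L$ joins a point of $[x_1x_2]\subset\bar B(x_0,\alpha)$ to a point of $[x_1'x_2']\subset\bar B(x_0',\alpha)$ and use the convexity of the disks and the location of the arcs, not merely the fact that the segments sit on opposite sides of the radical axis.
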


\begin{figure}[ht]
\centering
\begin{center}
\psset{unit=1.5cm}

\begin{pspicture}(5,3.5)
\pscircle(2,2){1.5}
\pscircle(3,2){1.5}
\uput[90](0.9393398,3.06066){$B'$}
\uput[90](4.06066,3.06066){$B$}
\psline(0,2)(5,2)
\psdot(0.9393398,0.9393398)
\psdot(2,0.5)
\uput[270](5,2){$T$}
\psdot(3,0.5)
\uput[270](3,0.5){$x_2$}
\psdot(4.06066,0.9393398)
\uput[270](4.06066,0.9393398){$x_1$}
\psline(3,0.5)(4.06066,0.9393398)
\uput[90](0.9393398,0.9393398){$x_1'$}
\uput[270](2,0.5){$x_2'$}
\psline(0.9393398,0.9393398)(2,0.5)
\psline (0,0.8)(5,0.4532108)
\uput[270](5,0.4532108){$L$}
\end{pspicture}
\end{center}
\caption{Illustrating the proof of \lemref{two-edges}.}
\label{fig:two-edges}
\end{figure}

\begin{proof}
Let $B$ and $B'$ be a shorthand for $B(x_0, \alpha)$ and $B(x'_0, \alpha)$, respectively.
Since the maximum above is bounded by $\pi/2$, it is enough to prove the inequality when
$$a = \|x_0 - x'_0\| + \max_{i,j \in\{1,2\}} \|x_i - x_j'\| < \alpha.$$
Let $T = (x_0 x'_0)$, and let $H$ and $\tilde{H}$ denote the two half-spaces defined by $T$.  Let $t$ denote the
intersection point $(\partial B\setminus B') \cap T$, and define $t'$ analogously. Let $m$ denote the intersection point $\partial B \cap \partial B' \cap H$,  and define $\tilde{m}$ analogously.
See \figref{two-edges} for an illustration.

We claim that, when $a < \alpha$, the points $x_1, x_2, x_1', x_2'$ are either all in $H$ or all in $\tilde{H}$.  Indeed, when $x_i$ and $x_j'$ are on opposite sides of $T$, then either $x_i \in \overarc{m t}$ and $x'_j \in \overarc{\tilde{m} t'}$, or $x_i \in \overarc{\tilde{m} t}$ and $x'_j \in \overarc{m t'}$.  (For two points $s,t \in \partial B$, $\overarc{st}$ denotes the shorter arc defined on $\partial B$ by $s$ and $t$.)
The distance between a point in $\overarc{m t}$ and a point in $\overarc{\tilde{m} t'}$ is not smaller than the minimum of $\|t - \tilde{m}\| \ge \sqrt{2} \alpha$ and $\|m - \tilde{m}\| \ge \sqrt{3} \alpha$, since $0 < \|x_0 - x_0'\| < \alpha$.
Therefore, assume without loss of generality that $x_1, x_2, x_1', x_2' \in H$.

Let $y$ be the point in $H \cap \partial B$ furthest from $T$, so the tangent of $\partial B$ at $y$ is parallel to $T$.  Define $y'$ similarly, with $B'$ in place of $B$.
We claim that $x_1, x_2 \in B(y, \sqrt{2}a)$ and $x_1', x_2' \in B(y', \sqrt{2}a)$.
We prove this for $x_1$, without loss of generality, and consider the two possible cases:
\bitem
\item If $x_1 \in \overarc{y m}$, then
$$\|y -x_1\| \leq \|y -m\| \le \|y -y'\| = \|x_0 - x'_0\| \leq a.$$
\item If $x_1 \in \overarc{t y}$, let us define $h=\|x_1-y\|$, $d=\dist(x_1,(yx_0))$ and $z=P_{(yx_0)}(x_1)$. By the Pythagoras theorem,
\begin{alignat*}{2}
d^2 &+ \|y-z\|^2 &&= h^2, \\
d^2 &+ (\alpha-\|y-z\|)^2 &&= \alpha^2.
\end{alignat*}
%
From this we get $d^2=h^2(1-h^2/(4\alpha^2))\geq h^2/2$, where the inequality is due to $s \leq \dist(t, y) = \sqrt{2}\alpha$.
But $d \leq \max_{i,j \in\{1,2\}} \|x_i - x_j'\|\leq a$. Hence, $h\leq \sqrt{2}a$, as claimed.
\eitem



By the fact that $B$ is convex, the angle between $(x_1 x_2)$ and $T$ is bounded
from above by the maximum angle between $T$ and the tangent of $\partial B$ at any point in $\overarc{x_1 x_2}$.
Moreover, by direct calculations, similar to that on \lemref{delta}, for any point on $x \in \partial B$ such that $\|y -x\| \leq \sqrt{2} \alpha$, the
angle between $T$ and the tangent of $\partial B$ at $x$ is bounded by $2 \asin(\|y - x\|/(2 \alpha)) \le \pi \|y -x\|/(2\alpha)$.
Hence, by the fact that $x_1, x_2 \in B(y, \sqrt{2} a) \subset B(y, \sqrt{2} \alpha)$, we have
$$\angle ((x_1 x_2), T) \leq \frac\pi{2\alpha} \max\{\|y -x_1\|, \|y -x_2\|\} \leq \frac\pi{2\alpha} \sqrt{2} a = \frac{\pi a}{\sqrt{2} \alpha}.$$
Similarly,
$$\angle ((x_1' x_2'), T)\leq  \frac{\pi a}{\sqrt{2} \alpha}.$$
%

By an analogous convexity argument, coupled with the fact that all the action is in half-space $H$, $\angle(L, T)$ is bounded from
above by the maximum of any angle between $T$ and a tangent of $\partial B$ at any point in $\overarc{x_1 x_2}$, or any angle
between $T$ and a tangent of $\partial B'$ at any point in $\overarc{x_1' x_2'}$.  Hence, as before, we get
\[
\angle(L, T) \le \frac{\pi a}{\sqrt{2} \alpha}.
\]

All the bounds combined, together with the triangle inequality, yield
$$
\angle ((x_1 x_2), L) \leq \angle ((x_1 x_2), T) + \angle (T, L) \leq \frac{2 \pi a}{\sqrt{2} \alpha},
$$
and similarly for $(x_1' x_2')$.
\end{proof} \medskip

The following result is useful when comparing the length of two curves in terms of their Hausdorff distance and their deviation angle.

\begin{lem}[Th.~43 in~\citep{1479764}]
\label{lem:main-approx}
Let $\Gamma$ be a compact curve in $\bbR^2$ such that $\rho(\Gamma) \geq r$ and let $C$ be another curve in $\bbR^2$, differentiable almost everywhere, such that $C \subset B(\Gamma, r)$ and $P_\Gamma$ is one-to-one on $C$.  Then
$$
\frac{\cos \angle (C, \Gamma)}{1 + \frac1r \cH(C, \Gamma)} \leq \frac{\lambda(\Gamma)}{\lambda(C)} \leq \frac{1}{1 - \frac1r \cH(C, \Gamma)}.
$$
\end{lem}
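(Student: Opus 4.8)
The plan is to compute both lengths in the normal (tubular) coordinates of $\Gamma$, exploiting that $\rho(\Gamma)\ge r$ makes $\Gamma$ a $C^{1,1}$ curve whose unit tangent $T$ and unit normal $\eta$ fields are $1/r$-Lipschitz, with signed curvature $\kappa$ defined almost everywhere and satisfying $|\kappa|\le 1/r$ (Federer). Since $P_\Gamma$ is one-to-one on $C$, it is a bijection from $C$ onto the sub-curve $\Gamma' := P_\Gamma(C)\subseteq\Gamma$; I will prove the two inequalities with $\Gamma'$ in place of $\Gamma$, which coincides with the statement when the projection is onto (the case of interest). Parametrize $\Gamma'$ by arc length, $y(u)$ for $u$ in an interval $I$ of total length $\lambda(\Gamma')=|I|$, and write the corresponding point of $C$ as $c(u)=y(u)+s(u)\,\eta(u)$, where $\eta(u)$ is the unit normal at $y(u)$ and $s(u)$ is the signed distance from $c(u)$ to $\Gamma$. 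By definition of the Hausdorff distance, $|s(u)|\le\cH(C,\Gamma)$ for all $u$.

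The heart of the argument is a pointwise identity for the speed of $C$. Differentiating $c=y+s\eta$ and using the Frenet relations $y'=T$ and $\eta'=\pm\kappa T$ gives $c'(u)=J(u)\,T(u)+s'(u)\,\eta(u)$, where $J(u):=1+s(u)\kappa(u)$ (with the sign absorbed into $\kappa$). Because $T\perp\eta$, taking the inner product with $T$ yields $c'\cdot T=J$, so the angle $\phi(u)$ between $\vec{C}_{c(u)}$ and $\vec{\Gamma}_{y(u)}$ satisfies $\cos\phi(u)=J(u)/|c'(u)|$, i.e.
\[
|c'(u)|=\frac{J(u)}{\cos\phi(u)}.
\]
The key estimate on $J$ is $|J(u)-1|=|s(u)\kappa(u)|\le |s(u)|/r\le\cH(C,\Gamma)/r$, so that $1-\cH(C,\Gamma)/r\le J(u)\le 1+\cH(C,\Gamma)/r$; in particular $J>0$ since $\cH(C,\Gamma)<r$. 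Also $\cos\angle(C,\Gamma)\le\cos\phi(u)\le 1$ because $\angle(C,\Gamma)=\sup_u\phi(u)$.

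Integrating over $I$ then gives the result. For the upper bound, $|c'(u)|\ge J(u)\ge 1-\cH(C,\Gamma)/r$ (using $\cos\phi\le1$), so $\lambda(C)=\int_I|c'|\,du\ge(1-\cH(C,\Gamma)/r)\,|I|=(1-\cH(C,\Gamma)/r)\,\lambda(\Gamma')$, which rearranges to $\lambda(\Gamma')/\lambda(C)\le 1/(1-\cH(C,\Gamma)/r)$. For the lower bound, $|c'(u)|\le(1+\cH(C,\Gamma)/r)/\cos\angle(C,\Gamma)$, so $\lambda(C)\le\frac{1+\cH(C,\Gamma)/r}{\cos\angle(C,\Gamma)}\,\lambda(\Gamma')$, which rearranges to $\lambda(\Gamma')/\lambda(C)\ge\cos\angle(C,\Gamma)/(1+\cH(C,\Gamma)/r)$. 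Combining the two yields the claimed sandwich.

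I expect the main obstacle to be the rigorous justification of the parametrization and the change of variables, rather than the (short) computation above. Specifically, I must argue that $u\mapsto c(u)$, obtained by composing the inverse of $P_\Gamma:C\to\Gamma'$ with the arc-length parametrization of $\Gamma'$, is rectifiable and differentiable for almost every $u$, that its arc-length integral recovers $\lambda(C)$, and that at such $u$ the tangent line of $C$ is indeed spanned by $c'(u)$ so that $\phi(u)$ equals the deviation angle being measured. This relies on the Lipschitz regularity of $P_\Gamma$ on $B(\Gamma,r)$ and of the tangent and normal fields of a positive-reach set (Federer, Th.~4.8), together with the a.e.\ differentiability of $C$; these are exactly the ingredients packaged in the cited Theorem~43 of \citep{1479764}, and writing them out carefully is where the real work lies.
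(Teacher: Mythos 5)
Your proposal is mathematically sound, but it takes a genuinely different route from the paper: the paper does not prove this lemma at all, it simply observes that the statement is an immediate consequence of Theorem~43 of the cited reference together with Federer's Lemma~4.17 (reach bounds the radius of curvature from above, so $\rho(\Gamma)\ge r$ yields $|\kappa|\le 1/r$). You instead reconstruct the underlying argument from first principles: writing $c(u)=y(u)+s(u)\eta(u)$ in normal coordinates along $\Gamma'=P_\Gamma(C)$, deriving the speed identity $|c'|=J/\cos\phi$ with $J=1+s\kappa$ and $|J-1|\le \cH(C,\Gamma)/r$, and integrating. The pointwise computation is correct, and your handling of the two directions (using $|c'|\ge J$ for the upper bound, $|c'|\le J/\cos\angle(C,\Gamma)$ for the lower) matches the structure of the stated sandwich; your caveat that you really bound $\lambda(\Gamma')$ rather than $\lambda(\Gamma)$ is appropriate, since surjectivity of $P_\Gamma$ is what the paper's application (via \prpref{polygon}) supplies. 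What your approach buys is a self-contained explanation of \emph{why} the inequality holds and where each factor comes from; what it costs is exactly what you admit in your last paragraph: the measure-theoretic justification that $u\mapsto c(u)$ is absolutely continuous, that $\int_I|c'|\,du$ computes $\lambda(C)$ (the upper bound on $\lambda(C)$ needed for the lower inequality is the delicate direction), and that $c'(u)$ spans $\vec C_{c(u)}$ for a.e.\ $u$ -- and for these you defer back to the very theorem being cited, which is essentially the paper's entire proof. So your argument is best read as an instructive expansion of the citation rather than an independent replacement for it.
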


\begin{proof}
The result is an immediate consequence of~\cite[Th.~43]{1479764} and the fact that the reach bounds the radius of curvature from above \citep[Lem.~4.17]{MR0110078}.
\end{proof}

\section{Some properties of $\alpha$-edges}\label{sec:edges}

Our standing assumption in this section is the following:
\renewcommand{\theenumi}{($\star$)}
\renewcommand{\labelenumi}{\theenumi}
\benum
\item \label{setting} The data points $\cX_n=\{X_1,\ldots,X_n\}$ are independently sampled from a uniformly distribution with compact support $S \subset \bbR^2$ such that both $S$ and $S^\comp$ satisfy the $r$-rolling condition.
\eenum
\renewcommand{\theenumi}{\arabic{enumi}.}
\renewcommand{\labelenumi}{\theenumi}

For any pair of distinct data points within distance $2\alpha$ from each other, there are only
two circles of radius $\alpha$ passing through them, symmetric with respect to the line joining the two points.  In the special case
of an $\alpha$-edge, at least one of the two circles is empty of data points inside.  The following result implies that, with probability tending to one, the center of such a circle lies outside of $S$.

\begin{prp} \label{prp:center-out}
Assume {\em \ref{setting}}.
For any $\alpha>0$, there is a constant $A > 0$ depending only on $(\alpha,r,\diam(S))$ such that, with probability at least $1 - A e^{- n/A}$, there are no open balls of radius $\alpha$ with center in $S$ empty of data points.
\end{prp}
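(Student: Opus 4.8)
The plan is to replace the uncountable family of candidate balls by a finite one through a covering argument, and then to control each member of the resulting finite family by a direct computation that exploits the uniform sampling. The point to keep in mind is that, since $\alpha$ is a fixed constant, the net can be taken to have cardinality bounded by a constant independent of $n$; the union bound then costs only a constant factor and the exponential rate in $n$ survives intact, which is what yields the claimed form $A e^{-n/A}$.

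First I would fix $\eps = \tfrac12 \min\{\alpha, \diam(S)\}$, a positive constant satisfying $\eps < \diam(S)$, and take an $\eps$-net $z_1, \dots, z_m \in S$ of $S$. By the first part of \lemref{net}, $m \le A' \eps^{-2}$, and since $\eps$ is itself a constant, $m$ is bounded by a constant depending only on $(\alpha, r, \diam(S))$. The reduction is then as follows: if some $z \in S$ has $B(z, \alpha) \cap \cX_n = \emptyset$, choose a net point $z_j$ with $\|z - z_j\| < \eps$; for any $w \in B(z_j, \alpha - \eps)$ the triangle inequality gives $\|w - z\| \le \|w - z_j\| + \|z_j - z\| < \alpha$, so that $B(z_j, \alpha - \eps) \subset B(z, \alpha)$ and hence $B(z_j, \alpha - \eps) \cap \cX_n = \emptyset$ too. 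Note that $\alpha - \eps > 0$ by the choice of $\eps$.

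It then remains to bound, for a fixed $j$, the probability that $B(z_j, \alpha - \eps)$ contains no data point. Since $z_j \in S$, \lemref{center-out} provides a ball $b_j \subset S$ of radius $\rho_0 = \tfrac12 \min\{\alpha - \eps, r\} > 0$ contained in $B(z_j, \alpha - \eps)$, so that $\mu(b_j) \ge \pi \rho_0^2$. Because the $X_i$ are i.i.d.\ uniform on $S$,
\[
\P\big(B(z_j, \alpha - \eps) \cap \cX_n = \emptyset\big) \le \P\big(b_j \cap \cX_n = \emptyset\big) = \left(1 - \frac{\mu(b_j)}{\mu(S)}\right)^{\!n} \le \exp\!\left(-\frac{\pi \rho_0^2}{\mu(S)}\, n\right),
\]
and bounding $\mu(S) \le \pi \diam(S)^2$ makes the exponent at least $n/A$ for a suitable constant $A$. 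A union bound over the constantly many net points then yields
\[
\P\big(\exists\, z \in S : B(z, \alpha) \cap \cX_n = \emptyset\big) \le \sum_{j=1}^m \P\big(B(z_j, \alpha - \eps) \cap \cX_n = \emptyset\big) \le m \exp(-n/A),
\]
which is of the desired form after enlarging $A$ to absorb the factor $m$.

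The only genuinely delicate choice is the net parameter $\eps$: it must be small enough that the shrunken ball $B(z_j, \alpha - \eps)$ keeps a positive radius and still contains a fixed-radius ball inside $S$ (so that \lemref{center-out} applies with a strictly positive $\rho_0$), yet it must remain a fixed constant so that the net stays of constant size and the prefactor does not grow with $n$. Both requirements are met by $\eps = \tfrac12 \min\{\alpha, \diam(S)\}$; the rest of the argument is routine.
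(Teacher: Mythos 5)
Your proof is correct and follows essentially the same route as the paper's: discretize the family of candidate balls with a constant-size net from \lemref{net}, use \lemref{center-out} to lower-bound the probability mass of each net ball, and finish with a union bound over the constantly many net points. The only (immaterial) difference is bookkeeping: the paper first shrinks the empty $\alpha$-ball to an $\alpha/2$-ball inside $S$ via \lemref{center-out} and then nets, whereas you net first and apply \lemref{center-out} to the shrunken net balls.
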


\begin{proof}
Let $d = \diam(S)$ and assume without loss of generality that $S \subset \bar{B}(0, d)$.
We will focus on the case $\alpha\leq r$. The case $\alpha>r$ can be analyzed similarly. By \lemref{center-out}, if there is a ball of radius $\alpha$ with center in $S$ empty of data points, then there is a ball
of radius $\alpha/2$ included within $S$ that is empty of data points.  By  \lemref{net}, there is an $(\alpha/5)$-net of $S$, denoted $z_1, \dots, z_m$, satisfying $m \le A_1$, where $A_1$ depends only on $d$ and $\alpha$.  By the triangle inequality any ball of radius $\alpha/2$ included within $S$ contains a ball of the form $B(z_k, \alpha/5)$.  Hence,
\beqn
\P\big(\exists z \in S : \cX_n \cap B(z, \alpha) = \emptyset \big)
&\le& \P\big(\exists k =1,\dots, m : \cX_n \cap B(z_k, \alpha/5) = \emptyset \big) \\
&\le& \sum_{k=1}^m \P\big(\cX_n \cap B(z_k, \alpha/5) = \emptyset \big) \\
&=& \sum_{k=1}^m \left[1 - \frac{\mu(B(z_k, \alpha/5))}{\mu(S)}\right]^n \\
&\le& A_1 \left[1 - (\alpha/(5d))^2\right]^n,
\eeqn
where in the second inequality we used the union bound and in the third we used the fact that $m \le A_1$ and $S \subset \bar{B}(0,d)$.
Therefore the result holds with $A=\max\{A_1, -1/\log[1 - (\alpha/(5d))^2]\}$.
\end{proof}

\begin{rem}\label{rem:alpha-extremes}
We say that a data point is $\alpha$-isolated if there are no other data points within distance $2 \alpha$ from it.
Suppose that $X_i$ is $\alpha$-isolated so that $B(X_i, 2 \alpha) \cap \cX_n = \{X_i\}$.
By the $r$-convexity of $S^\comp$, there is an open ball $B \subset S$ with radius $\alpha$ such that $X_i \in B$, which in particular satisfies $B\subset B(X_i,2\alpha) \cap S$.
Let $B' \subset B$ be an open ball of radius $\alpha/2$ such that $X_i\notin B'$.  By construction, $B'$ is included within $S$ and is empty of data points.  We conclude by \prpref{center-out} that, under \ref{setting}, with high probability, there are no $\alpha$-isolated data points.
\end{rem}

\begin{prp} \label{prp:similar}
Take $\alpha > 0$ and finite set of points $\cX \subset \bbR^2$ such that there are no $\alpha$-isolated points.  Then the vertices of the $\alpha$-shape of $\cX$ and the vertices of the $\alpha$-convex hull of $\cX$ coincide.
\end{prp}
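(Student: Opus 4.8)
The plan is to reduce both notions of ``vertex'' to a single condition on a data point, namely the existence of an empty open $\alpha$-ball carrying that point, and then to see that the hypothesis is exactly what upgrades one contact point to two. Recall first that the $\alpha$-convex hull of $\cX$, which I denote by $H$, admits the explicit description $H = \big(\bigcup_B B\big)^\comp$, where the union runs over all open balls $B$ of radius $\alpha$ with $B \cap \cX = \emptyset$ (I call such balls \emph{empty}): this set is $\alpha$-convex, contains $\cX$, and is contained in every $\alpha$-convex set containing $\cX$, so it is the smallest one. Taking the vertices of $H$ to be the points of $\cX$ lying on $\partial H$, and the vertices of the $\alpha$-shape to be the endpoints of $\alpha$-edges, the claim becomes that for $X_i \in \cX$,
$$X_i \in \partial H \iff X_i \text{ is an endpoint of some } \alpha\text{-edge}.$$

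First I would record two elementary equivalences. By definition, $X_i$ is an endpoint of an $\alpha$-edge precisely when there is an empty $\alpha$-ball $B$ with $X_i$ and some other $X_j$ on $\partial B$. And $X_i \in \partial H$ precisely when there is an empty $\alpha$-ball $B$ with $X_i \in \partial B$: the implication ``$\Leftarrow$'' is immediate, since an empty $B$ satisfies $B \subset H^\comp$ while $X_i \in \cX \subset H$, forcing $X_i \in \partial B \cap H \subseteq \partial H$; for ``$\Rightarrow$'', as $H$ is closed and $X_i$ is a limit of points of $H^\comp = \bigcup_B B$, one selects empty balls $B(c_n,\alpha)$ meeting every neighbourhood of $X_i$, extracts a convergent subsequence $c_n \to c$, and checks by passing to the limit that $B(c,\alpha)$ is empty with $\|X_i - c\| = \alpha$ (the distance is $\le \alpha$ in the limit and $\ge \alpha$ because $X_i \in H$ cannot lie in an empty ball). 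The easy inclusion of the Proposition then comes for free: an endpoint of an $\alpha$-edge has an empty ball through it, hence lies on $\partial H$.

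The substantive direction is to promote a single contact point to two, and this is where the hypothesis enters. Suppose $X_i \in \partial H$, so the set $K$ of centers $c \in \partial B(X_i,\alpha)$ for which $B(c,\alpha)$ is empty is nonempty; since the condition $B(c,\alpha) \cap \cX = \emptyset$ is closed in $c$, the set $K$ is a nonempty closed subset of the circle $\partial B(X_i,\alpha)$. If $X_i$ is not $\alpha$-isolated, some other data point lies within distance $2\alpha$ of $X_i$, and one checks that such a point can be enclosed in $B(c,\alpha)$ for a suitable $c \in \partial B(X_i,\alpha)$; thus $K$ is a proper subset of the circle and has a boundary point $c^\ast$ relative to it. Approaching $c^\ast$ from outside $K$ produces empty balls about to capture a data point, and by finiteness of $\cX$ together with a limiting argument, at $c^\ast$ some $X_j \neq X_i$ lies on $\partial B(c^\ast,\alpha)$ while $B(c^\ast,\alpha)$ stays empty. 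Then $B(c^\ast,\alpha)$ witnesses the $\alpha$-edge $(X_i,X_j)$, so $X_i$ is a vertex of the $\alpha$-shape.

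The main obstacle is precisely this rolling argument, and the role of the assumption is transparent within it: an $\alpha$-isolated point is exactly one for which the empty ball rotates freely around $X_i$, i.e.\ $K$ is the entire circle and no second contact ever arises; such a point sits on $\partial H$ (as an isolated point of $H$) yet is the endpoint of no $\alpha$-edge, which is the only way the two vertex sets can disagree. Excluding these points via the no-$\alpha$-isolated hypothesis closes the gap and yields the equivalence. I would finally remark that the same analysis matches the edges as well: two consecutive boundary vertices of $H$ lie on a common boundary arc, hence on a common empty ball, and therefore form an $\alpha$-edge, so the two boundaries share the same combinatorial structure.
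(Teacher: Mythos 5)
Your argument is correct and follows essentially the same route as the paper's: both reduce each notion of vertex to the existence of an empty open $\alpha$-ball through the data point in question, and both use the no-$\alpha$-isolated hypothesis to rotate that ball about the point until it acquires a second contact, producing an $\alpha$-edge. The only differences are presentational — you establish the characterization of $\cX \cap \partial H$ by a direct limiting argument where the paper cites a result of Cuevas et al., and you formalize the pivot via the closed set $K$ of admissible centers on $\partial B(X_i,\alpha)$ where the paper describes the rolling informally.
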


\begin{proof}
Let $C$ and $H$ denote the $\alpha$-shape of $\cX$ and the $\alpha$-convex hull of $\cX$, respectively.
Note in particular that $H = \bigcap_{B \in \cB} B^\comp$ where $\cB$ is the set of open balls of radius $\alpha$ that do not intersect $\cX$.
First, take $x \in \cX$ such that $x\in\partial H$.  By \cite[Prop.~2]{cuefrapat12}, there is a open ball $B$ of radius $\alpha$ such that $x \in \partial B$ but $B\cap \cX=\emptyset$. 
Let $B$ pivot on $x$.  Since $x$ is not $\alpha$-isolated, the ball will eventually hit another data point, denoted $x'$.   Then $x$ and $x'$ belong to the
boundary of an open ball $B'$ of radius $\alpha$ that does not contain any other data point by construction---for otherwise the ball would have hit that another data point before $x'$---so $[xx']$ forms an $\alpha$-edge.  
This implies that $x$ is a vertex of $C$.
By definition of $H$ above, $B' \subset H^\comp$.
Therefore $x \in \overline{B'} \subset \overline{H^\comp}$, and since $x \in H$, we have $x \in H \cap \overline{H^\comp} = \partial H$.
\end{proof}

The next proposition bounds the expected number of $\alpha$-edges.

\begin{prp} \label{prp:number}
Assume {\em \ref{setting}}.
For  any $\alpha \in (0,r)$, there is a constant $A > 0$ depending only on $(\alpha,r,\diam(S))$
such that the expected number of $\alpha$-edges is bounded by $A n^{1/3}$.
\end{prp}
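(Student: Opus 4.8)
The plan is to bound the expected number of $\alpha$-edges, call it $N$, through the first-moment identity $\E[N] = \sum_{i<j}\P(E_{ij})$, where $E_{ij}$ is the event that $(X_i,X_j)$ is an $\alpha$-edge. Conditioning on $X_i = x_1$ and $X_j = x_2$ with $0 < \|x_1 - x_2\| < 2\alpha$, there are exactly two circles of radius $\alpha$ through $x_1,x_2$, bounding two open balls $B_+$ and $B_-$, and $E_{ij}$ occurs iff at least one of $B_+, B_-$ is empty of the remaining $n-2$ points. Since those points are i.i.d.\ uniform on $S$, a given ball $B$ is empty with probability $(1 - \mu(B\cap S)/\mu(S))^{n-2} \le \exp(-(n-2)\mu(B\cap S)/\mu(S))=:q(B)$, so conditionally $\P(E_{ij}) \le q(B_+) + q(B_-)$. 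This gives
\[
\E[N] \;\le\; \binom n2 \frac1{\mu(S)^2}\int_{S\times S}\IND{\|x_1-x_2\|<2\alpha}\,\big(q(B_+)+q(B_-)\big)\,dx_1\,dx_2 .
\]

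Next I would pass to the center of the empty ball. Writing $u(\theta)=(\cos\theta,\sin\theta)$, the map $\Psi:(z,\theta_1,\theta_2)\mapsto(z+\alpha u(\theta_1),\,z+\alpha u(\theta_2))$ has Jacobian determinant $\alpha^2|\sin(\theta_1-\theta_2)|$ and is two-to-one onto the ordered pairs at distance $<2\alpha$, the two preimages being precisely the two circumscribing centers. By the area formula the two terms $q(B_+)+q(B_-)$ therefore assemble into a single integral over centers,
\[
\int_{S\times S}\IND{\|x_1-x_2\|<2\alpha}\big(q(B_+)+q(B_-)\big)\,dx_1dx_2
= \alpha^2\int_{\bbR^2} q(B(z,\alpha))\Big(\int_{I(z)}\!\int_{I(z)}|\sin(\theta_1-\theta_2)|\,d\theta_1 d\theta_2\Big)dz,
\]
where $I(z)=\{\theta: z+\alpha u(\theta)\in S\}$ parametrizes the arc $\partial B(z,\alpha)\cap S$ (the distance constraint is automatic, as any two points on a circle of radius $\alpha$ are within $2\alpha$). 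Centers $z\in S$ contribute negligibly: by \lemref{center-out}, $\mu(B(z,\alpha)\cap S)$ is then bounded below by a positive constant, so $q$ is exponentially small and, integrated over the bounded set $S$, gives an $O(e^{-n/A})$ term. It remains to treat $z\notin S$ with $\delta(z):=\alpha-\dist(z,S)\in(0,\alpha)$.

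The two governing estimates are now applied with $\delta=\delta(z)$. \lemref{ball-vol} gives $\mu(B(z,\alpha)\cap S)\ge A\,\delta^{3/2}$, hence $q(B(z,\alpha))\le\exp(-cn\delta^{3/2})$. For $\delta$ below the threshold of \lemref{delta}, estimate \eqref{pair-dist} (taking $t=A\delta$) shows that any two points of $\partial B(z,\alpha)\cap S$ are within $O(\sqrt\delta)$ of each other, so $I(z)$ lies in an angular window of width $\Theta(z)\le C_0\sqrt\delta$; for larger $\delta$, $q$ is already $e^{-\Omega(n)}$ and the contribution is negligible. The crucial point is the angular integral: since $|\sin(\theta_1-\theta_2)|\le|\theta_1-\theta_2|$ and both angles lie in a window of width $\Theta(z)$, one obtains $\int_{I(z)}\int_{I(z)}|\sin(\theta_1-\theta_2)|\,d\theta_1 d\theta_2 \le \tfrac13\Theta(z)^3 \le C_1\,\delta^{3/2}$, a \emph{cubic} gain. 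Integrating over the outer tube $\{z\notin S:\dist(z,S)<\alpha\}$ by the coarea formula, and using that each parallel $\{\dist(z,S)=u\}$ has length bounded by a constant depending only on $r,\diam(S)$ (from $\rho(\partial S)\ge r$ together with \lemref{length-bounded}), everything reduces to
\[
\E[N]\;\lesssim\; n^2\int_0^\alpha \delta^{3/2}\exp(-cn\delta^{3/2})\,d\delta \;\asymp\; n^2\cdot n^{-5/3} \;=\; n^{1/3},
\]
via the substitution $s=\delta^{3/2}$, which gives the claimed bound $A\,n^{1/3}$.

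The main obstacle, and the reason the exponent is $1/3$ rather than the $2/3$ one would naively read off from the area of the $n^{-2/3}$-tube around $\partial S$, is to extract the full factor $\Theta(z)^3\sim\delta^{3/2}$ from the angular integral and to have it match exactly the $\delta^{3/2}$ appearing in the exponent through \lemref{ball-vol}. The third power comes from combining the smallness of the arc provided by \lemref{delta} (two powers of $\sqrt\delta$, one per endpoint) with the sine factor in the change-of-variables Jacobian (the third power); losing this last factor would give only $n^{2/3}$. A secondary technical point is justifying the tube integration: one must check that the integrand is controlled by a function of $\delta(z)$ alone (an upper bound suffices, which the lemmas supply) and that the parallel sets have uniformly bounded length, so that the coarea reduction to a one-dimensional integral in $\delta$ is legitimate.
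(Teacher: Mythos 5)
Your argument is correct, but it takes a genuinely different route from the paper. The paper's proof is a short reduction: on the high-probability event that no data point is $\alpha$-isolated, the vertices of the $\alpha$-shape coincide with those of the $\alpha$-convex hull (\prpref{similar}), the expected number of vertices of the $\alpha$-convex hull is $O(n^{1/3})$ by an external result \citep[Th.~3]{patrod13}, and the complementary event contributes only $n \cdot O(e^{-n/A})$. Your proof is instead self-contained and bounds the number of edges directly: the first-moment identity, the two-to-one change of variables $(x_1,x_2)\mapsto(z,\theta_1,\theta_2)$ with Jacobian $\alpha^2|\sin(\theta_1-\theta_2)|$, the void-probability bound $e^{-cn\delta^{3/2}}$ from \lemref{ball-vol}, the $O(\sqrt{\delta})$ angular window from \eqref{pair-dist}, and a coarea reduction over the outer tube combine to give $\E[N]\lesssim n^2\int_0^\alpha \delta^{3/2}e^{-cn\delta^{3/2}}\,d\delta\asymp n^{1/3}$. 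I checked the pieces: the Jacobian is right; the assembly of $q(B_+)+q(B_-)$ into a single integral over centers is legitimate because the summand is symmetric over the two preimages, so the area formula's multiplicity exactly absorbs it; centers $z\in S$ are killed by \lemref{center-out}; and the parallel sets $\{z:\dist(z,S)=u\}$ for $0<u<\alpha<r$ have length at most $(1+u/r)\lambda(\partial S)$ since $y\mapsto y+u\eta_y$ is onto them and Lipschitz, so the coarea step is sound. What your approach buys is transparency and independence from the external citation --- it exposes exactly where the exponent $1/3$ comes from (the $\delta^{3/2}$ from the angular integral matching the $\delta^{3/2}$ in the exponent), and in effect reproves the imported result of \citep{patrod13}. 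The paper's route buys brevity and reuses \prpref{similar}, which it needs elsewhere anyway. The only points to tighten in a written version are the elementary fact that a subset of a circle of small diameter lies in a proportionally short arc (so that $\Theta(z)\le C_0\sqrt{\delta}$), and the explicit restriction to $\delta$ below the threshold of \lemref{delta} --- both of which you already flag.
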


\begin{proof}
Let $N_\alpha^{\rm shape}$ and $N_\alpha^{\rm h ull}$ denote the number of vertices
of the $\alpha$-shape and $\alpha$-convex hull, respectively, and let $F$ denote the event that there are no $\alpha$-isolated points.
By \prpref{similar}, $N_\alpha^{\rm shape} = N_\alpha^{\rm h ull}$ on $F$, so that $N_\alpha^{\rm shape} \le N_\alpha^{\rm h ull} \mathbbm{1}_F + n \mathbbm{1}_{F^\comp}$, and consequently
\[
\E(N_\alpha^{\rm shape}) \le \E(N_\alpha^{\rm hull}) + n  \P(F^\comp).\]
On the one hand, $\P(F^\comp) = 1 - \P(F) \le A_1 e^{-n/A_1}$ for some constant $A_1$, by \prpref{center-out} and \remref{alpha-extremes}.
On the other hand, by \citep[Th.~3]{patrod13}, $\E(N_\alpha^{\rm hull}) \le A_2 n^{1/3}$, for some constant $A_2$.
From this, we conclude.
\end{proof}

\begin{rem} \label{rem:prob}
For $i < j$, let $G_{ij}$ be the event that $[X_iX_j]$ forms an $\alpha$-edge.  By the fact that the points are iid, $\P(G_{ij})$ is independent of $i < j$.  Hence, the expected number of $\alpha$-edges $\binom{n}{2} \P(G_{ij})$ and \prpref{number} implies that $\P(G_{ij}) \le A n^{-5/3}$ for some constant $A$.
\end{rem}

The next result ensures that, with high probability, for each connected component of $\partial S$ there is at least one $\alpha$-edge within distance $\alpha$.

\begin{prp} \label{prp:exists}
Assume {\em \ref{setting}}.
For any $\alpha\in (0,r)$, there is a constant $A > 0$ depending only on $(\alpha,r,\diam(S))$  such that, with probability at least $1-Ae^{-n/A}$, for any connected component of $\partial S$, there is an $\alpha$-edge with an endpoint within distance $\alpha$ of that component.
\end{prp}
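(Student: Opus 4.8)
The plan is to show that, near each connected component of $\Gamma := \partial S$, the sample contains a vertex of the $\alpha$-shape, i.e.\ an endpoint of some $\alpha$-edge. Recall that any data point $X_j$ lying on the boundary of an \emph{open} ball of radius $\alpha$ that is empty of data points is automatically such a vertex: pivoting that ball about $X_j$ until it meets a second data point $X_k$ (possible because, by \prpref{center-out} and \remref{alpha-extremes}, with high probability there are no $\alpha$-isolated points) produces an empty ball through $X_j$ and $X_k$, so that $[X_j X_k]$ is an $\alpha$-edge with endpoint $X_j$, exactly as in the proof of \prpref{similar}. It therefore suffices to exhibit, for each component, an empty open $\alpha$-ball whose boundary contains a data point lying within distance $\alpha$ of that component, and to do so on a single event of probability $1 - A e^{-n/A}$.

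Fix a component $\Gamma_0$ of $\Gamma$ and a point $y \in \Gamma_0$, with outward unit normal $\eta_y$. The argument is a sweeping-ball construction: for $s \in [0,\alpha)$ set $c(s) = y + (\alpha - s)\eta_y$ and $B_s = B(c(s), \alpha)$. At $s = 0$ the ball $B_0$ is tangent to $\Gamma$ at $y$ and contained in the outer rolling ball $B(y + r\eta_y, r) \subset S^\comp$, hence $B_0$ is empty of data points; as $s$ grows, the center moves inward and $B_s$ begins to capture a lens of $S$ near $y$. For each data point $X$ let $s_X$ be the first $s$ with $\|X - c(s)\| = \alpha$, and put $s^* = \min_X s_X$. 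Since $\|X - c(0)\| > \alpha$ for every data point (almost surely, as $\bar B_0$ meets $S$ only at $y$), continuity yields a data point $X_j$ with $X_j \in \partial B_{s^*}$, while minimality of $s^*$ forces $\|X - c(s^*)\| \ge \alpha$ for all $X$, so no data point lies strictly inside $B_{s^*}$. Thus $B_{s^*}$ is an empty open $\alpha$-ball with $X_j$ on its boundary.

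It remains to control $s^*$ and hence the location of $X_j$. Writing $z = c(s^*) \notin S$, one has $\dist(z,\Gamma) = \alpha - s^*$, so $\delta := \alpha - \dist(z,S) = s^*$. Applying \lemref{delta} (with $t = A\delta$) to the pair $X_j$ and $p := y - s^*\eta_y \in \partial B_{s^*} \cap S$ gives $\|X_j - p\| \le \sqrt{A s^*}$, whence $\dist(X_j, \Gamma_0) \le \|X_j - y\| \le \sqrt{A s^*} + s^*$. This is at most $\alpha$ as soon as $s^* \le s_0$ for a suitable constant $s_0 = s_0(\alpha, r) < \alpha$, so we only need to guarantee $s^* \le s_0$. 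This holds whenever $B_{s_0}$ already contains a data point, and its failure probability is, by \lemref{ball-vol} applied at $z = c(s_0)$ (where $\alpha - \dist(z,\Gamma) = s_0$),
\[
\P\big(\bar B_{s_0} \cap \cX_n = \emptyset\big) = \Big(1 - \tfrac{\mu(B_{s_0}\cap S)}{\mu(S)}\Big)^n \le \exp\big(-n A s_0^{3/2}/\mu(S)\big).
\]

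Finally, the number of components of $\Gamma$ is bounded by a constant: each is a closed curve of curvature at most $1/r$, hence of length at least $2\pi r$, while $\lambda(\Gamma) \le A_1$ by \lemref{length-bounded}. Choosing one point $y$ per component and taking a union bound over the at most $A_1/(2\pi r)$ resulting sweeping events, together with the event that there are no $\alpha$-isolated points, yields the claim with failure probability of the form $A e^{-n/A}$. I expect the main obstacle to be the careful formulation of the stopping value $s^*$ — in particular verifying that $B_{s^*}$ is genuinely empty of data points even though the family $\{B_s\}$ is not nested — and pinning down $s_0$ small enough that the captured point is provably within distance $\alpha$ of the intended component.
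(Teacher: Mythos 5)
Your proof is correct, and its core --- sweeping a ball of radius $\alpha$ along the normal at a chosen point $y$ of each component until it first touches a data point, then pivoting that empty ball about the touched point to manufacture an $\alpha$-edge --- is exactly the paper's construction (the paper parametrizes the sweep outward from $y$ via $y_s=y+s\eta$ rather than inward from the tangent position, but the stopping ball is the same). Where you genuinely diverge is in certifying that the touched point $X_j$ lies within distance $\alpha$ of the \emph{correct} component. The paper conditions only on the single global event of \prpref{center-out} (every $\alpha/2$-ball centered in $S$ meets the sample), which already forces $0<s^*<\alpha$ for every component simultaneously; it then observes that the segment from $X_j$ to the stopping center must cross $\partial S$ and invokes Federer's contractibility of $B(y_{s^*},\alpha)\cap\partial S$ to place that crossing point on $\Gamma_k$. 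You instead make $s^*$ quantitatively small by adding, per component, the event that $B_{s_0}$ captures a data point (controlled via \lemref{ball-vol}) and then bound $\|X_j-y\|\le\sqrt{As^*}+s^*$ directly through \lemref{delta} applied to the pair $X_j$, $p=y-s^*\eta$; this sidesteps the topological argument entirely but costs a union bound over components, hence your (correct, e.g.\ via Fenchel's theorem) remark that each component has length at least $2\pi r$ so their number is bounded by a constant. Both routes close the argument: the paper's is more economical in probabilistic events and needs no bound on the number of components, while yours trades a soft topological fact for an explicit metric estimate. The only points to tighten in a full write-up are ones you already flag --- that $s^*$ is attained and $B_{s^*}$ is genuinely empty despite the family $\{B_s\}$ not being nested (immediate, since each $s\mapsto\|X-c(s)\|$ is continuous and starts above $\alpha$), and that $s_0$ must also satisfy $A s_0\le\min\{\alpha,2\alpha^2/r\}$ so that \lemref{delta} is applicable.
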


\begin{proof}
Suppose that all the open balls of
radius $\alpha/2$ centered at a point in $S$ intersect the sample. By Proposition \ref{prp:center-out} this happens
with probability at least $1 - A e^{-n/A}$ for
some constant $A > 0$.
We saw in \remref{alpha-extremes} that this implies that there are no $\alpha$-isolated
data points. Let $\Gamma_k$ be a connected component of $\Gamma=\partial S$.
Fix $y \in\Gamma_k$ and let $\eta$ denote the normal
unit vector of $\Gamma_k$ at $y$ pointing away from $S$.
For $s \ge 0$, define $y_s = y + s \eta$ and let $s^* = \inf\{s > 0: B(y_s, \alpha) \cap \cX_n = \emptyset\}$.  Notice that $B(y_\alpha,\alpha)\subset S^\comp$ and, therefore, it
is empty of data points.  Hence, $s^*<\alpha$.  Moreover, we also have $s^* > 0$, since we are assuming that $B(y_{0},\alpha/2)$ contains at least one data point (since $y_0 = y \in S$).
By construction, there exists a data point $X_i \in \partial B(y_{s^*},\alpha)$.
Now, pivot the ball $B(y_{s^*},\alpha)$ on $X_{i}$ as we did in the proof of \prpref{similar}.
Since $X_i$ is not $\alpha$-isolated, the ball will eventually hit another data point, denoted $X_j$, and $[X_i X_j]$ will form an $\alpha$-edge.  And, since $\|X_i - y_{s^*}\| = \alpha$ and $y_{s^*} \in S^\comp$ (remember $0<s^*<\alpha$), there
is $z\in [X_i y_{s^*}]$ such that $z\in \Gamma$.
We now use the fact that $B(y_{s^*},\alpha)\cap \Gamma$ is contractible \citep[Rem.~4.15]{MR0110078}, and since $B(y_s^*,\alpha)\cap\Gamma_k\neq \emptyset$, we must have $B(y_{s^*},\alpha)\cap\Gamma=B(y_{s^*},\alpha)\cap\Gamma_k$, which in turn implies that $z\in\Gamma_k$ and, therefore, $\dist(X_i,\Gamma_k)<\alpha$.
\end{proof}

Next, we prove some quantitative results about $\alpha$-edges.  In plain English, we show that, with probability tending to one, $\alpha$-edges are near the boundary
of $S$, have small length and their deviation angle with the boundary of $S$ is small.
%

\begin{prp} \label{prp:edge}
Assume {\em \ref{setting}}.
For $i < j$, let $G_{ij}$ denote the event that $[X_{i} X_{j}]$ is an $\alpha$-edge, and for $t > 0$, let $H_{ij, t}$ denote the event that
\beq \label{edge}
[X_{i} X_{j}] \subset B(\partial S, t), \quad
\|X_{i} -X_{j}\| \leq \sqrt{t} \quad \text{and} \quad
\angle([X_{i} X_{j}], \partial S) \leq \sqrt{t}.
\eeq
For any $\alpha\in (0,r)$, there is a constant $A > 0$ depending only on $(\alpha,r,\diam(S))$ such that, for any $0<t \leq \min\{\alpha, 2\alpha^2/r\}$, $\P(G_{ij} \cap H_{ij, t}^\comp) \le A e^{-n t^{3/2}/A}$.
\end{prp}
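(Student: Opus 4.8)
The plan is to show that the only way $[X_i X_j]$ can form an $\alpha$-edge while one of the three properties in \eqref{edge} fails is for the empty ball witnessing the edge to reach deep into $S$, which happens with exponentially small probability. Throughout, let $E$ denote the event of \prpref{center-out}, namely that no open ball of radius $\alpha$ with center in $S$ is empty of data points; recall that $\P(E^\comp) \le A e^{-n/A}$.

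Whenever $0 < \|X_i - X_j\| < 2\alpha$ (which holds on $G_{ij}$ up to a null set), there are at most two balls of radius $\alpha$ through $X_i$ and $X_j$, with centers $z_+ = z_+(X_i,X_j)$ and $z_- = z_-(X_i,X_j)$; set $\delta_\sigma = \alpha - \dist(z_\sigma, S)$. On $G_{ij}$ at least one of $B(z_+,\alpha), B(z_-,\alpha)$ contains no data point, and on $E$ its center lies outside $S$, so that $\dist(z_\sigma, S) = \dist(z_\sigma, \partial S)$ and $\delta_\sigma \ge 0$. Since $X_i, X_j \in \partial B(z_\sigma,\alpha)\cap S$, \lemref{delta} applies to such a center as soon as $0 < \delta_\sigma \le t/a_0$, where $a_0$ is the constant furnished by that lemma, and its three conclusions are precisely the events defining $H_{ij,t}$. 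Reading this contrapositively, on $G_{ij}\cap H_{ij,t}^\comp \cap E$ the witnessing center satisfies $\delta_\sigma > t/a_0$ (the boundary case $\delta_\sigma = 0$ cuts out a Lebesgue-null set of configurations $(X_i,X_j)$ and is discarded). This yields the inclusion
\[
G_{ij}\cap H_{ij,t}^\comp \subseteq E^\comp \cup \bigcup_{\sigma\in\{+,-\}}\big\{\,B(z_\sigma,\alpha)\cap\cX_n=\emptyset,\ z_\sigma\notin S,\ \delta_\sigma > t/a_0\,\big\}.
\]

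It then remains to bound each term in the union. I would condition on $X_i = x_1$ and $X_j = x_2$, which fixes $z_\sigma$ and turns $\{z_\sigma\notin S,\ \delta_\sigma > t/a_0\}$ into a deterministic constraint on $(x_1,x_2)$. On that constraint \lemref{ball-vol} gives $\mu(B(z_\sigma,\alpha)\cap S) \ge a_1 \delta_\sigma^{3/2} > a_1 (t/a_0)^{3/2}$, where $a_1$ is the constant from that lemma. The remaining $n-2$ points being i.i.d.\ uniform on $S$, the conditional probability that none lands in $B(z_\sigma,\alpha)$ is $\big(1 - \mu(B(z_\sigma,\alpha)\cap S)/\mu(S)\big)^{n-2} \le \exp\!\big(-(n-2)\,a_1 (t/a_0)^{3/2}/\mu(S)\big)$, a bound uniform over the admissible $(x_1,x_2)$. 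Taking the expectation of the indicator of the constraint times this conditional probability removes the conditioning, and $\mu(S) \le \pi \diam(S)^2$ keeps the constant admissible. Combining the two centers, the term $\P(E^\comp)$, and the fact that $t^{3/2} \le (\min\{\alpha, 2\alpha^2/r\})^{3/2}$ is bounded (so that $e^{-n/A}$ is itself at most $e^{-n t^{3/2}/A'}$ after enlarging the constant, and $n-2 \ge n/2$ for large $n$ while small $n$ is trivial since the target bound exceeds $1$), yields $\P(G_{ij}\cap H_{ij,t}^\comp) \le A e^{-n t^{3/2}/A}$.

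The crux, and the only place the exponent $3/2$ enters, is the interplay of \lemref{delta} and \lemref{ball-vol}: the former certifies that a shallow empty ball already forces all of \eqref{edge}, so that failure of \eqref{edge} forces penetration depth $\delta_\sigma$ of order at least $t$, while the latter converts such penetration into a region of $S$ of area of order at least $t^{3/2}$ that the sample must entirely avoid. I expect the main difficulty to be organizational rather than computational: correctly recognizing that the bad event is captured by the two explicit centers $z_\pm$ (so that no continuum union over candidate centers is needed), and cleanly merging the center-out contribution into the target bound using the boundedness of $t$.
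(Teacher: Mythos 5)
Your proposal is correct and follows essentially the same route as the paper's proof: decompose the bad event over the complement of the ``no empty ball centered in $S$'' event of \prpref{center-out} and the two explicit circumcenters $\zeta^{\pm}_{ij}$, use \lemref{delta} contrapositively to force penetration depth greater than $t/A_1$, convert that into an area of order $t^{3/2}$ via \lemref{ball-vol}, and condition on $(X_i,X_j)$ to bound the probability that the remaining $n-2$ points avoid that region. Your explicit handling of the degenerate case $\delta_\sigma=0$ and of small $n$ are minor tidiness points the paper leaves implicit; there is no substantive difference.
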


\begin{proof}
Let $\Gamma$ be a shorthand for $\partial S$.
For any two distinct points $x, x' \in \bbR^2$ such that $\|x -x'\| < 2 \alpha$, define
\[\zeta^\pm(x, x') =x+ \alpha \, \Xi_{\pm \theta}\left(\frac{x'-x}{\|x'-x\|}\right),\]
where $\theta = \acos(\|x -x'\|/(2 \alpha))$ and $\Xi_\theta$ denotes the rotation at angle $\theta$. By
construction, $x,x' \in \partial B(\zeta^\pm(x, x'), \alpha)$, and $\zeta^\pm(x, x')$ are the only two points with this property.  Let $\zeta^\pm_{ij}$ be short for $\zeta^\pm(X_i, X_j)$, if $\|X_i-X_j\|<2\alpha$, and $(\zeta^+_{ij}, \zeta^-_{ij}) = (X_i, X_j)$, otherwise.

Let $E$ be the event that there are no open balls of radius $\alpha$ with center in $S$ empty of data points.  We studied this event in \prpref{center-out}.
With $A_1$ denoting the constant of \lemref{delta}, we have
\begin{multline*}
H_{ij, t}^\comp \cap G_{ij} \cap E \subset \Big\{\exists \, \eps \in \{-,+\} :
\cX_n \cap B(\zeta_{ij}^\eps, \alpha) = \emptyset, \zeta_{ij}^\eps\notin S \text{ and} \dist(\zeta_{ij}^\eps, S) < \alpha - t/A_1\Big\}.
\end{multline*}
Therefore, the union bound gives
\[\P(H_{ij, t}^\comp \cap G_{ij} \cap E) \le \sum_{\eps = \pm} \pr{\cX_n \cap B(\zeta_{ij}^\eps, \alpha) = \emptyset ,\zeta_{ij}^\eps\notin S  \text{ and} \dist(\zeta_{ij}^\eps, S) < \alpha - t/A_1}.\]
With $A_2$ denoting the constant of \lemref{ball-vol}, for any deterministic point $\zeta \notin S$ such that $\dist(\zeta, S) < \alpha - t/A_1$, we have
\beqn
\pr{\cX_{n-2} \cap B(\zeta, \alpha) = \emptyset}
&=& \left(1 -\frac{\mu(S \cap B(\zeta, \alpha))}{\mu(S)}\right)^{n-2} \\
&\le& \left(1 -\frac{A_2 t^{3/2}}{A_1^{3/2}\pi d^2}\right)^{n-2} \\
&\leq& A_3 e^{- n t^{3/2}/A_3},
\eeqn
for some constant $A_3$ which depends only on $\alpha, r$ and $d := \diam(S)$.
Hence, conditioning on $(X_i, X_j)$, we have
\beqn
\pr{\cX_n \cap B(\zeta_{ij}^\eps, \alpha) = \emptyset, \ \zeta_{ij}^\eps\notin S \text{ and} \dist(\zeta_{ij}^\eps, S) < \alpha - t/A_1}
\le A_3 e^{- n t^{3/2}/A_3}.
\eeqn
Together with \prpref{center-out}, we arrive at
\[\P(H_{ij, t}^\comp \cap G_{ij}) \le \P(H_{ij, t}^\comp \cap G_{ij} \cap E)+\P(E^\comp)\le A_4 e^{- n t^{3/2}/A_4},\]
for some constant $A_4$, again depending only on $(\alpha,r,d)$.
\end{proof}

The next two results combined imply that, with high probability, the $\alpha$-edges form a
simple polygon in one-to-one correspondence with $\partial S$.  The first result shows
that, with high probability, two distinct points in the union of all $\alpha$-edges do not project on
the same point on $\partial S$. We also show that $\alpha$-edges are all one-sided in
the sense that at least one of the two open balls of radius $\alpha$ that circumscribes an $\alpha$-edge contains a data point.

\begin{prp} \label{prp:injective}
Assume {\em \ref{setting}}.
For any $\alpha\in (0,r)$, there is a constant $A > 0$ depending
only on $(\alpha,r,\diam(S))$ such that, with probability at least $1 - A e^{-n/A}$: (i) all $\alpha$-edges are one-sided; and (ii) the metric projection onto $\partial S$ is injective on the union of all $\alpha$-edges.
\end{prp}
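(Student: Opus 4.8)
The plan is to carry out both (i) and (ii) deterministically on a single high-probability event $\Omega_0$, defined by three conditions: (a) the event $E$ of \prpref{center-out} that no open ball of radius $\alpha$ with center in $S$ is empty of data points; (b) the conclusion of \prpref{edge} holding \emph{simultaneously} for every $\alpha$-edge, with a small constant $t=t_*$ (depending only on $\alpha,r$, and at most $\min\{\alpha,2\alpha^2/r\}$) to be fixed; and (c) the probability-one event that no circle of radius $\alpha$ passes through three data points. Event (a) fails with probability at most $Ae^{-n/A}$; for (b), \prpref{edge} gives $\P(G_{ij}\cap H_{ij,t_*}^\comp)\le Ae^{-nt_*^{3/2}/A}$ per pair, and a union bound over the $\binom n2$ pairs (absorbing the polynomial factor into the exponential) keeps the failure probability at most $Ae^{-n/A}$. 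Thus $\P(\Omega_0)\ge 1-Ae^{-n/A}$, and on $\Omega_0$ every $\alpha$-edge $[X_iX_j]$ lies in $B(\Gamma,t_*)$, has length at most $\sqrt{t_*}$, and satisfies $\angle([X_iX_j],\Gamma)\le\sqrt{t_*}$, where $\Gamma=\partial S$.

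For part (i), let $B^\pm=B(\zeta^\pm,\alpha)$ be the two radius-$\alpha$ balls through $X_i,X_j$, and let $y\in\Gamma$ be a boundary point near the edge. I would show the center on the $S$-side belongs to $S$. Since the edge is short, the distance from it to either center is $h=\sqrt{\alpha^2-\|X_i-X_j\|^2/4}\ge\alpha-O(t_*)$, bounded away from $0$; since the edge lies within $t_*$ of $\Gamma$ and makes angle at most $\sqrt{t_*}$ with $\vec{\Gamma}_y$, the $S$-side center sits at depth $\approx h$ into $S$ near $y$, hence inside the inner rolling ball $B(y-r\eta_y,r)\subset S$ once $t_*$ is small (as $0<h<r$). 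So one of $\zeta^+,\zeta^-$ lies in $S$; on $E$ the ball centered there cannot be empty, forcing the edge to be one-sided, with the \emph{outer} center (in $S^\comp$) being the empty one.

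For part (ii), I first rule out two points of the same edge projecting to a common $y$: this would make the edge direction the normal at $y$, so $\angle([X_iX_j],\vec{\Gamma}_y)=\tfrac\pi2$, contradicting the bound $\sqrt{t_*}<\tfrac\pi2$. The main case is two distinct edges $e_1=[X_iX_j]$, $e_2=[X_kX_l]$ carrying $x\in e_1$, $x'\in e_2$ with $P_\Gamma(x)=P_\Gamma(x')=y$. Then all four endpoints lie in $B(y,2\sqrt{t_*})$ (each edge meets the normal at $y$ within $t_*$ and has length at most $\sqrt{t_*}$), and by part (i) each edge has an empty outer ball with center $\zeta_1,\zeta_2\in S^\comp$. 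The quantitative facts I need are $0<\|\zeta_1-\zeta_2\|<2\alpha$ with in fact $\|\zeta_1-\zeta_2\|=O(\sqrt{t_*})$ (both centers equal $y+\alpha\eta_y$ up to $O(\sqrt{t_*})$, since the edges are short, near $y$, and nearly tangent), where $\zeta_1\ne\zeta_2$ holds by event (c). Since $X_i,X_j\in\partial B(\zeta_1,\alpha)$ are data points while $B(\zeta_2,\alpha)$ is empty, we get $X_i,X_j\in\partial B(\zeta_1,\alpha)\setminus B(\zeta_2,\alpha)$ and symmetrically for $X_k,X_l$, and the normal line $L$ at $y$ meets both $[X_iX_j]$ and $[X_kX_l]$. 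This is precisely the hypothesis of \lemref{two-edges}, which gives $\angle((X_iX_j),L)=O(\sqrt{t_*})$, since both $\|\zeta_1-\zeta_2\|$ and the pairwise distances between the endpoints of the two edges are $O(\sqrt{t_*})$.

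Finally, combining this with near-tangency closes the argument: because $L$ is normal to $\Gamma$ at $y$, it is perpendicular to $\vec{\Gamma}_y$, so the triangle inequality for angles gives
\[
\tfrac\pi2=\angle(L,\vec{\Gamma}_y)\le\angle(L,(X_iX_j))+\angle((X_iX_j),\vec{\Gamma}_y)\le O(\sqrt{t_*})+\sqrt{t_*}.
\]
Choosing $t_*$ small enough that the right-hand side is strictly below $\tfrac\pi2$ yields a contradiction, so no two distinct edges project to a common boundary point, proving (ii). The main obstacle is exactly this cross-edge case: making \lemref{two-edges} applicable requires the combined quantitative control that the two empty outer balls have nearly coinciding centers and mutually exterior endpoints, which is where one-sidedness (part (i)), the short-edge and small-angle bounds of \prpref{edge}, and the almost-sure non-degeneracy $\zeta_1\ne\zeta_2$ are all used together; the within-edge case and part (i) are comparatively routine consequences of the rolling condition and \prpref{center-out}.
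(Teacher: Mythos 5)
Your proposal is correct and follows essentially the same route as the paper: the same high-probability event (no empty $\alpha$-balls centered in $S$ plus the conclusions of Proposition~\ref{prp:edge} for all edges with a fixed small $t$), the same argument for (i) via placing the inner center inside the rolling ball $B(y-r\eta_y,r)\subset S$, and the same contradiction for (ii) obtained by applying Lemma~\ref{lem:two-edges} to the normal line at the common projection and comparing with the near-tangency bound. Your only addition is the almost-sure non-degeneracy event guaranteeing the two empty-ball centers are distinct, which makes explicit a hypothesis of Lemma~\ref{lem:two-edges} ($0<\|x_0-x_0'\|$) that the paper leaves implicit.
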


\begin{proof}
Let $\Gamma$ be a shorthand for $\partial S$ and $d =\diam(S)$.
Assume there are no balls of radius $\alpha$ with center in $S$ empty of data points and that, for $t$ fixed (and chosen small enough in what follows), all
the $\alpha$-edges satisfy \eqref{edge}.  Both events happen together with probability at least $1 - A e^{-n/A}$, for some constant $A > 0$, by Propositions~\ref{prp:center-out} and~\ref{prp:edge}.

We first show that, if $t$ is small enough, all $\alpha$-edges are one-sided. Let $[x_1x_2]$ ($x_1=X_{i_1},x_2=X_{i_2}$) be an arbitrary $\alpha$-edge.  Let $x_m=(x_1+x_2)/2$ be the
midpoint of that $\alpha$-edge and $\rho=(\alpha^2-\|x_1-x_m\|^{2})^{1/2}$. If there is a ball of radius $\alpha$, $B$, such that $x_1, x_2\in \partial B$, then
the center of $B$ is either $z_e=x_m+\rho u$ or $ z_{s}=x_m-\rho u$, where $u$ is the unit vector orthogonal to $(x_1x_2)$ such that $\langle u,\eta\rangle >0$, $\eta$ being
the outward pointing unit normal vector at $y_m=P_{\Gamma}(x_m)$, which is well-defined when $t<r$.
Notice that the vector $u$ is well defined when $\sqrt{t} < \pi/2$,
since in that case $(x_1x_2)$ is not orthogonal to $\Gamma$. We will prove that, for $t$ even smaller, $z_s\in S$ and therefore $B(z_s, \alpha)$ is not empty of sample points.
Define $c_{s}=y_m-\rho \eta$ and $c=y_m-r\eta$.  By the $r$-rolling property, $B(c,r)\subset S$.
By the triangle inequality and \eqref{edge}, we have
$$
\|z_{s}-c_{s}\|\leq \|x_m-y_m\|+\rho\|u-\eta\| \leq t +\alpha\|u-\eta\|,
$$
with, for $t$ small enough,
$$
\|u-\eta\|^2 = 2(1-\langle u,\eta\rangle ) \le 2(1 - \cos \angle([x_{1} x_{2}], \Gamma) ) \leq 2t,
$$
using \eqref{edge} (i.e., $\angle([x_{1} x_{2}], \Gamma) \leq \sqrt{t}$) and the fact that $\cos(a)\geq 1-a^2$ for any $a \in \bbR$.
Using the triangle inequality and \eqref{edge}, again, we get
$$
\|z_s-c\|\leq \|z_s-c_s\|+\|c_s-c\|\leq t + \alpha \sqrt{2 t} + r-\sqrt{\alpha^2 - (\sqrt{t})^2} <r,
$$
for $t$ small enough, in which case $z_s \in B(c, r) \subset S$.

Now we prove that the metric projection onto $\Gamma$ is injective on
the union of all $\alpha$-edges.  Indeed, assume that this is not the case, so there are two distinct points belonging to some (necessarily distinct) $\alpha$-edges, $x \in [X_{i_1} X_{i_2}]$ and $x' \in [X_{i_1'} X_{i_2'}]$, with the same
metric projection onto $\Gamma$, denoted $y = P_\Gamma(x) = P_\Gamma(x')$.
Let $\eta$ be the outward pointing unit normal vector at $y$.
For short, let $x_1 = X_{i_1}$, $x_2 = X_{i_2}$, $x_1' = X_{i_1'}$, $x_2' = X_{i_2'}$.  By the triangle inequality and
the fact that $\|x - x'\| \leq \dist(x, \Gamma) + \dist(x', \Gamma)$, and then \eqref{edge}, we have
\beq \label{max-dist-edges}
\max_{i,j \in\{1,2\}} \|x_i - x_j'\|
\leq \|x - x'\| + \|x_1 -x_2\| + \|x_1' -x_2'\|
\leq 2t + 2 \sqrt{t}\leq 3 \sqrt{t},
\eeq
when $t$ is small enough.
Also by the triangle inequality for angles and \eqref{edge},
\beq \label{injective-angle}
\angle((x_1 x_2), (x_1' x_2'))
\leq \angle((x_1 x_2), \vec{\Gamma}_y) + \angle(\vec{\Gamma}_y, (x_1' x_2'))
\leq 2 \sqrt{t}.
\eeq
Let $B$ and $B'$ denote the open balls of radius $\alpha$ circumscribing $[x_1 x_2]$ and $[x_1' x_2']$, respectively, and empty of data points.  Since all $\alpha$-edges are one-sided, these balls are uniquely defined.
Also, define $z'_e$ and $z'_s$ analogously to $z_e$ and $z_s$ above, but based on $x'_1$ and $x'_2$, instead of $x_1$ and $x_2$.
Using the same notation as above, we have $B=B(z_e,\alpha)$ and $B'=B(z_e',\alpha)$ and
\begin{equation}\label{balls-close}
\|z_e-z_e'\|\leq \|x_m-x_{m}'\|+\|\rho u -\rho' u'\|.
\end{equation}
Reasoning as in \eqref{max-dist-edges} above, we have $\|x_m-x_{m}'\| \le 3 \sqrt{t}$.  Also,
$$
\|\rho u -\rho' u'\|^2=\rho^2 +(\rho')^2-2\rho\rho'\langle u,u'\rangle.
$$
Using \eqref{edge}, $\rho^2 =\alpha ^2 -\|x_1-x_m\|^2\geq \alpha ^2-t$ and, similarly, $(\rho')^2 \geq \alpha^2-t$.  Moreover, by \eqref{injective-angle} and using again the inequality $\cos(a)\geq 1-a^2$ for any $a \in \bbR$, we get $\langle u, u'\rangle\geq 1-4t$. Hence,
$$
\|\rho u -\rho' u'\|^2 \leq 2\alpha^2 - 2 (\alpha^2-t) (1 - 4t) \le (8\alpha^2+2)t.
$$
Hence, the bound in \eqref{balls-close} leads to $\|z_e-z_e'\|\leq 3 \sqrt{t} + (8\alpha^2+2)^{1/2}\sqrt{t} = A_1 \sqrt{t}$ when $t$ is small enough, where $A_1$ is a constant.  Combining this bound with that
in \eqref{injective-angle}, and applying \lemref{two-edges}, we obtain that
$$
\max\{\angle((x x'), (x_1 x_2)), \angle((x x'), (x_1' x_2'))\} \leq A_2 \sqrt{t},
$$
where $A_2$ is a constant.
By the fact that $(x x')$ is parallel to $\eta$ \citep[Th.~4.18(12)]{MR0110078} and using \eqref{edge}, we also have
$$
\max\{\angle((x x'), (x_1 x_2)), \angle((x x'), (x_1' x_2'))\} \geq \frac{\pi}{2} -\sqrt{t}.
$$
We therefore have a contradiction when $t$ is small enough that all the derivations above apply and, in addition, $\sqrt{t} < \pi/(2A_2+2)$.
\end{proof}

\begin{rem} \label{rem:one-sided}
Any one-sided $\alpha$-edge shares each one of its endpoints with
another $\alpha$-edge.  Indeed, suppose $[x_1 x_2]$ is an $\alpha$-edge, so that there
exists $\zeta$ such that $x_1, x_2 \in \partial B(\zeta, \alpha)$ and $\cX_n \cap B(\zeta, \alpha) = \emptyset$.  In that case, let $B(\zeta, \alpha)$ pivot on $x_2$, as we did in the proof
of \prpref{exists} away from $x_1$.  Let $x_3$ denote the first data point that the ball hits.  Then $[x_2 x_3]$ is an $\alpha$-edge by construction.  If $x_2$ is not shared with any other $\alpha$-edge, then the ball pivots on $x_2$ away from $x_1$ until it touches $x_1$ from the other side.  That (open) ball is empty of data points inside, and together with the ball we started with, makes $[x_1 x_2]$ two-sided.
\end{rem}

\begin{prp} \label{prp:polygon}
Assume {\em \ref{setting}}.
For  any $\alpha \in (0,r)$, there is a constant $A > 0$ depending only on $(\alpha,r,\diam(S))$
such that, with probability at least $1 - A e^{-n/A}$, the union of all $\alpha$-edges is in one-to-one correspondence with $\partial S$ via the metric projection
onto $\partial S$.
\end{prp}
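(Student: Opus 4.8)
The plan is to work throughout on a single high-probability event and reduce the statement to a purely topological fact. Let $\Gamma=\partial S$ and let $U$ denote the union of all $\alpha$-edges. I would condition on the event $\Omega$ on which, simultaneously: there is no open ball of radius $\alpha$ with center in $S$ empty of data points (\prpref{center-out}); every $\alpha$-edge satisfies \eqref{edge} for a fixed, small constant $t$ (\prpref{edge} together with a union bound over the $\binom{n}{2}$ pairs, the resulting $n^2$ factor being absorbed into the $Ae^{-n/A}$ form since $t$ is a constant); every $\alpha$-edge is one-sided and $P_\Gamma$ is injective on $U$ (\prpref{injective}); and for every connected component of $\Gamma$ there is an $\alpha$-edge endpoint within distance $\alpha$ of it (\prpref{exists}). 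Each event holds with probability at least $1-Ae^{-n/A}$, hence so does $\Omega$, for a possibly larger constant. On $\Omega$ we have $U\subset B(\Gamma,t)$ with $t<r\le\rho(\Gamma)$, so the metric projection $P_\Gamma$ is well defined and continuous on $U$ (continuity of the projection on the reach-neighborhood), and injective by \prpref{injective}. The claim thus reduces to proving that $P_\Gamma$ maps $U$ \emph{onto} $\Gamma$.

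Since $U$ is compact, $P_\Gamma|_U$ is a homeomorphism onto its image $P_\Gamma(U)\subseteq\Gamma$. I would first pin down the combinatorial structure of $U$. By \remref{one-sided}, each endpoint of each $\alpha$-edge is shared with at least one other $\alpha$-edge, so in the graph whose edges are the $\alpha$-edges every vertex has degree at least $2$. If some vertex had degree $\ge 3$, a neighborhood of it in $U$ would be a star with at least three prongs; removing the center would disconnect it into at least three pieces, which is impossible for the image of a point in the $1$-manifold $\Gamma$ (near any point of a $1$-manifold, deleting the point leaves at most two local components). Because $P_\Gamma|_U$ is an embedding into $\Gamma$, this forces every vertex to have degree exactly $2$. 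Hence $U$ is a compact $1$-manifold without boundary, i.e.\ a disjoint union of simple closed polygons, each homeomorphic to a circle.

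Next I would run the surjectivity argument. The map $P_\Gamma$ embeds each such polygon as a topological circle inside $\Gamma$; since $S$ and $S^\comp$ satisfy the $r$-rolling condition, $\Gamma$ is a closed $1$-manifold, so each of its components is itself a topological circle, and a subset of a circle that is homeomorphic to a circle must be the entire circle. Therefore each polygon maps onto a full connected component of $\Gamma$, and by injectivity of $P_\Gamma$ distinct polygons map onto distinct components; consequently $P_\Gamma(U)$ is a union of connected components of $\Gamma$. It remains to show that it contains every component $\Gamma_k$. By \prpref{exists} there is an $\alpha$-edge endpoint $X_i$ with $\dist(X_i,\Gamma_k)<\alpha$, and by \eqref{edge} also $\dist(X_i,\Gamma)\le t$; I would conclude $P_\Gamma(X_i)\in\Gamma_k$ using the fact that distinct components of $\Gamma$ lie at distance at least $2r$. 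This separation follows from $\rho(\Gamma)\ge r$: if two components were closer than $2r$, the midpoint of a shortest segment realizing the minimal inter-component distance would admit two distinct nearest points on $\Gamma$, contradicting uniqueness of the metric projection inside the reach. Hence, were $P_\Gamma(X_i)$ to lie on a different component $\Gamma_{k'}$, we would obtain points of $\Gamma_k$ and $\Gamma_{k'}$ within distance $\alpha+t<2r$ (for $t$ small), a contradiction. Thus $P_\Gamma(U)$ meets every component of $\Gamma$ and, being a union of components, equals $\Gamma$.

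The main obstacle is the passage from the local metric estimates of the previous propositions to the global topological conclusion. The injectivity supplied by \prpref{injective} is precisely what licenses treating $P_\Gamma|_U$ as an embedding, and it is this embedding property that simultaneously rules out degree-$\ge 3$ vertices and forces each component-circle of $U$ to cover a whole boundary circle; without it one could only claim that $P_\Gamma(U)$ is some subset of $\Gamma$. The second delicate point is correctly matching the $\alpha$-edge produced by \prpref{exists} to the intended component $\Gamma_k$, which is where the $2r$-separation of components is essential. Finally, some care is needed to fix $t$ as a single constant small enough that \prpref{edge}, \prpref{injective}, and the inequality $\alpha+t<2r$ all hold at once, keeping every estimate uniform in $n$.
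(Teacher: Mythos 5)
Your proposal is correct and follows essentially the same route as the paper: condition on the joint high-probability event supplied by \prpref{center-out}, \prpref{edge}, \prpref{exists} and \prpref{injective}, use \remref{one-sided} to show the $\alpha$-edges close up into simple closed polygons, and conclude by the topological fact that a continuous injective image of a circle inside a component of $\partial S$ (itself a circle) must be that entire component. The only cosmetic differences are that you obtain the polygon structure via a vertex-degree argument rather than the paper's explicit chain construction, and you match polygons to boundary components through a $2r$-separation argument where the paper invokes the disjointness of the tubular neighborhoods $B(\Gamma_\ell,\alpha)$ from Federer.
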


\begin{proof}
Let $\Gamma$ be a shorthand for $\partial S$ and $d = \diam(S)$, and let $C_\alpha$ denote the union of all $\alpha$-edges. Since $\Gamma$ is a (compact) one dimensional
manifold \citep{Walther99}, it is well-known that
each connected component of $\Gamma$ is a closed curve homeomorphic to the unit circle, see \cite[Thm. 5.27]{lee11}. We prove that this is
also the case for each connected component of $C_\alpha$.
We assume that the metric projection onto $\Gamma$, meaning $P_\Gamma$, is
injective on $C_\alpha$, that all $\alpha$-edges are
one-sided, that $C_\alpha\subset B(\Gamma,\alpha)$---so that $P_\Gamma$ is well-defined on $C_\alpha$---and that $C_\alpha\cap B(\Gamma_k,\alpha)\neq \emptyset$ for any connected component $\Gamma_k$ of $\Gamma$.
This event happens with probability at least $1 - A e^{-n/A}$ for some constant $A > 0$, by Propositions~\ref{prp:edge},~\ref{prp:exists} and~\ref{prp:injective}.
We prove that, under these circumstances, $C_\alpha$ is in one-to-one correspondence with $\Gamma$ via $P_\Gamma$.
Indeed, let $\Gamma_k$ be a connected component of $\Gamma$.
Let $[x_1x_2]$ be an $\alpha$-edge such that $[x_1x_2] \cap B(\Gamma_k, \alpha) \ne \emptyset$.
By assumption, there is a data point $x_3$ such that $[x_2x_3]$ is also an $\alpha$-edge.  Having constructed $[x_{a-1}x_a]$, let $x_{a+1}$ be a data point
such that $[x_a x_{a+1}]$ is an $\alpha$-edge.
Since $C_\alpha \subset B(\Gamma, \alpha) = \sqcup_\ell B(\Gamma_\ell, \alpha)$---where the union is of disjoint sets by
\cite[Rem.~4.15, (1)]{MR0110078}--- and the polygon $\cup_a [x_a x_{a+1}]$ is connected, necessarily, $\cup_a [x_a x_{a+1}] \subset B(\Gamma_k, \alpha)$.
Also, since the sequence $(x_a: a \ge 1)$ is made of finitely many data points, and $x_a \ne x_{a+1}$ for all $a$, there is $a,b \ge 1$ such that $x_a = x_{a+b+1}$, and we further may assume that $x_a, \dots, x_{a+b}$ are all distinct.  Therefore, by construction, $C = [x_a x_{a+1}]\cup \cdots \cup [x_{a+b-1} x_{a+b}]$ is a simple polygon made of $\alpha$-edges such that $C \subset B(\Gamma_k, \alpha)$.
In particular, the latter implies that $P_\Gamma(C) \subset \Gamma_k$, and since $C$ is homeomorphic to the unit circle and $P_\Gamma$ is continuous and injective on $C$, $P_\Gamma(C)$ is also homeomorphic to the unit circle.
This forces $P_\Gamma(C) = \Gamma_k$, due to $\Gamma_k$ being homeomorphic to the unit circle too.
Since all this is true for any $k$, meaning any connected component of $\Gamma$, we conclude therefore that $P_\Gamma : C_\alpha \to \Gamma$ is not only injective, but also surjective.
\end{proof}

\section{Proof of \thmref{main}} \label{sec:main}

We are now in a position to prove the main result, meaning, \thmref{main}. 
Let $\Gamma$ be a shorthand for $\partial S$ and let $C_\alpha$ denote the  union of all $\alpha$-edges.

By \prpref{edge} together with the union bound, and then \prpref{polygon}, for any $0<t \leq \min\{\alpha, 2\alpha^2/r\}$, with probability at least $1 -A_1 n^2 e^{-n t^{3/2}/A_1}$, for some constant $A_1 > 0$ depending only on $(\alpha, r, \diam(S))$, $C_\alpha$ is
in one-to-one correspondence with $\Gamma$ via the metric projection onto $\Gamma$, and
 satisfies $C_\alpha \subset B(\Gamma, t)$ and $\angle(C_\alpha, \Gamma) \leq \sqrt{t}$.  Note that, because $C_\alpha$ and $\Gamma$ are in one-to-one 
 correspondence, $C_\alpha \subset B(\Gamma, t)$ implies that $\Gamma \subset B(C_\alpha, t)$, so that $\cH(C_\alpha, \Gamma) \le t$.
We now apply \lemref{main-approx}, combined with the simple bounds $\cos a \geq 1 - a^2/2$, for $a>0$, and $(1-a)^{-1} \le 1 + 2 a$, valid when $0 < a \le 1/2$.  Assuming $t \le 1$, this yields
\[
\frac{\lambda(C_\alpha)}{\lambda(\Gamma)}\leq \frac{1+\frac{1}{r}\cH(C_\alpha,\Gamma)}{\cos(\angle (C_\alpha, \Gamma) ) }\leq \frac{1+t/r}{1-t/2}\leq (1+t/r)(1+t)\leq 1+(1+2/r)t
\]
and
\[
\frac{\lambda(C_\alpha)}{\lambda(\Gamma)}\geq 1-\frac{1}{r}\cH(C_\alpha,\Gamma)\geq 1-t/r.
\]
We get
\[
\left|\frac{\lambda(C_\alpha)}{\lambda(\Gamma)}-1\right|\leq(1+2/r)t.
\]
Hence, if $t\leq t_0 :=\min\{\alpha/2,2\alpha^2/r, 1\}$, we have
$$
\P\left(\left|\frac{\lambda(C_\alpha)}{\lambda(\Gamma)}-1\right|>(1+2/r)t\right)\leq A_1 n^2\exp(-n t^{3/2}/A_1).
$$
Then a change of variable concludes the proof of \thmref{main}.
\section{Numerical experiments}\label{sec:numerical}

In order to numerically check the conclusions of \thmref{main} we  performed a small simulation study. For the set $S$ we chose
the corona $\{x\in \mathbb{R}^{2}: 0.25\leq \|x\|\leq 1\}$.
In this case the value of $r$ is equal to $0.25$ (the radius of the hole) and $\lambda(\partial S)=2\pi(0.25+1)$.  The
selected sample sizes were $n=1000, 5000, 10000, 30000, 40000, 50000$. For each sample
size $n$, we simulated $M=1000$ samples from the uniform distribution
on $S$ and calculated the $\alpha$-shape for each sample.
The values of $\alpha$ were $0.05,0.1,0.15,0.2,0.24$, and the limit case $\alpha=r=0.25$.
Given $n$, $\alpha$, and sample $m \in \{1,\dots,M\}$, we computed the
sample $\alpha$-shape, denoted $C_\alpha^{n,m}$, using the R-package {\tt alphahull} of \cite{patrod10}, and then
 its perimeter $\lambda(C_\alpha^{n,m})$.
We estimated the expected error and bias by
$$e_\alpha(n)= \frac1M \sum_{m=1}^{M}|\lambda(C_\alpha^{n,m})-\lambda(\partial S)| \quad \text{and} \quad b_\alpha(n)=\frac1M \sum_{m=1}^{M} \lambda(C_\alpha^{n,m})-\lambda(\partial S),$$ respectively.
Let $s_\alpha(n)$ denote the sample standard deviation of $\{\lambda(C_\alpha^{n,m}), m =1,\ldots, M\}$.


\bitem \setlength{\itemsep}{0in}
\item Among the $\alpha$'s that we tried, the estimator performs best at $\alpha = 0.2$. It does not seem that, asymptotically, the best $\alpha$ converges to $r$.  For instance, the ratio $e_{0.24}(n)/e_{0.2}(n)$ is around 6.7 for $n\geq 30000$.

\item
\figref{error} shows the error versus sample size in log-log scale for
$\alpha=0.1,0.2, 0.24,0.25$. It can be seen that the error corresponding to $\alpha=r$ does no go to zero whereas $\alpha=0.2$ always outperform the other
considered values of $\alpha$. The trend for large values of $n$ is clearly linear and the slope is close to $-2/3$ as \thmref{main} predicts.
This is particularly true when $\alpha=0.2$ (our best choice), where fitting a line by least squares yields a slope of $-0.67$, with (Student) 95\%-confidence interval of $(-0.73,-0.62)$, and an R-squared exceeding 0.99.

\item For the limit case $\alpha=r$, the bias, $b_{\alpha}(n)$ does not go to zero as the sample size increases. The error $e_r{(n)}$ is approximately
equal to 0.18; see \figref{error}. This shows, from the numerical point of view, that the perimeter of
the $\alpha$-shape is not a consistent estimator of the $\lambda(\partial S)$ for $\alpha=r$. The main problem here
is that the length of the $\alpha$-edges does not go to zero, as \prpref{edge} states for $\alpha<r$.

\item The convergence rate of the standard deviation seems to be higher that $-2/3$.  In fact, we have reasons to believe that the slope is of order $n^{-5/6}$.  This is confirmed numerically.  Indeed, if we fit a line to the log-log plot of $s_{0.2}(n)$, we get a slope with (Student) 95\%-confidence interval of $(-0.86,-0.82)$.  So, asymptotically, it seems that the error is dominated by the bias.  This suggests that reducing the bias of the estimator could lead to improve the convergence rate of the method.

\item The random variable $\lambda(C_{\alpha})$ seems to be asymptotically normal.  For the greatest considered $n = 50,000$, the sample $\{\lambda(C_\alpha^{n,m}), m =1,\ldots, M\}$ passes the Shapiro-Wilks normality test for several values of $\alpha$. For instance, for $\alpha=0.2$, we got a p-value of 0.82.
\eitem

\begin{figure}[htbp]
\centering
\scalebox{0.75}{\includegraphics[width=0.6\linewidth,height=0.5\linewidth]{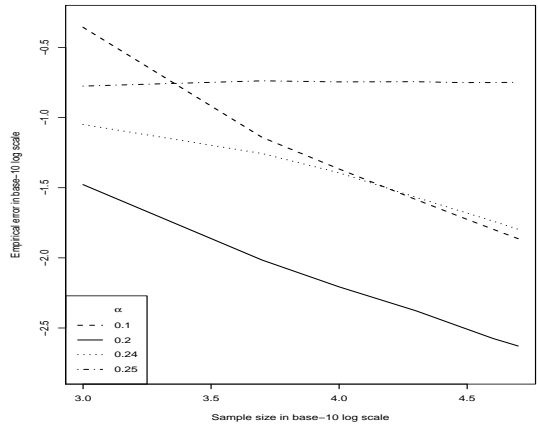}} \
\caption{Plot of error versus sample size, in log-log scale.  The error
corresponding to $\alpha = r = 0.25$ does not converge to zero.
For values of $\alpha < r$, the plots show asymptotic slopes which are all very close to $-2/3$, as \thmref{main} predicts.
}
\label{fig:error}
\end{figure}

\section{Discussion}\label{sec:discussion}

We discuss a number of extensions and open problems.

{\em Extensions.}
Our arguments extend more or less trivially to other sampling distributions.  It is completely straightforward to see that \thmref{main} applies verbatim to a sampling
distribution which has a density with respect to the uniform distribution which is bounded away from zero near the boundary of $S$.  A little less obvious is an
extension to the case where this density converges to zero at some given rate near the boundary, which ends up impacting the rate of convergence of
our estimator.  In any case, our estimator remains consistent.  The same results carry over to the case where $\partial S$ has a finite number of `kinks', i.e., points where the reach is infinite.

{\em Choice of tuning parameter.}
The estimator depends on knowledge of $r$, or at least a lower bound on $r$, since any $\alpha \in (0,r)$ fixed appears to yield the
convergence rate in $n^{-2/3}$.  Choosing $\alpha$ automatically, therefore, requires an estimate on the size of $r$.  This is done in recent work by \cite{r-estimation}.
Suppose we have an estimator $\hat r_n$ such that $r/2 \le \hat r_n \le 3r/2$ with high probability.  We speculate that the convergence bound obtained in \thmref{main} with $\alpha$ chosen equal to $\hat r_n/4$ remains valid, albeit with a different multiplicative constant.

{\em Finer asymptotics.}
\cite{MR1641826} were able to compute the exact asymptotic expected value and variance of the perimeter of the convex hull of a sample, and also to show an asymptotic normal limit.  An open problem would be to do the same here.  Our numerical experiments lead us to speculate that our estimator is also normal in the large-sample limit.

{\em Minimax rate.}
We conjecture that the rate that our estimator achieves, i.e., $n^{-2/3} {\rm polylog}(n)$, is not minimax optimal, not even in the exponent.  Indeed, we learn in \citep[Chap 8]{MR1226450} that for the problem of estimating the area (in the context of binary images), an estimator obtained from computing the area of an optimal set estimator (for the symmetric difference metric, and the $\alpha$-convex hull is such an estimator) only achieves the rate $n^{-2/3}$, while the optimal rate is $n^{-5/6}$ with the assumptions we make here.
It is very reasonable to infer that the same is true for the more delicate problem of perimeter estimation.  In fact, \cite{kim2000estimation}  show that $n^{-5/6}$ is (up to a poly-logarithmic factor) the minimax rate for perimeter estimation of a horizon (also in the context of binary images).  

{\em Higher dimensions.}
Our setting is that of a set $S$ in two dimensions.  How about higher dimensions?  The problem would be to estimate the $(d-1)$-volume of the boundary of a set $S \subset \bbR^d$, under the same conditions, and
the estimator would be the $(d-1)$-volume of the $\alpha$-shape of $\cX_n$, which is the union of all the $\alpha$-faces.  We say that $X_{i_1}, \dots, X_{i_d}$ form an $\alpha$-face if they are affine-independent and there is an open ball $B$ of radius $\alpha$ such that $X_{i_1}, \dots, X_{i_d} \in \partial B$ and $B \cap \cX_n = \emptyset$.
Most of the auxiliary lemmas and propositions can be extended to the general framework. However, we have no idea how to extend \prpref{polygon}.

{\em The $\alpha$-convex hull.}
Our results apply to the $\alpha$-convex hull of the sample.
This is because, with high probability, it shares the same vertices as the $\alpha$-shape (by \prpref{similar}).  When this is the case, the former is the union of arcs of radius $\alpha$ with base the $\alpha$-edges.  In particular, if an $\alpha$-edge is of length $\ell$, then the length of that arc is $2\alpha \sin^{-1}(\ell/(2\alpha)) = \ell + O(\ell^3)$.
By \prpref{edge} and an application of the union bound, the largest $\alpha$-edge is of order $O_P(\log(n)/n)^{2/3}$.  We conclude that the ratio between the
perimeters of the $\alpha$-convex hull and of the $\alpha$-shape is of order $1 + O_P(\log(n)/n)^{4/3}$.
We note, however, that the perimeter of the $r$-convex hull is consistent while the perimeter of the
$r$-shape is not necessarily so.  Our results require $\alpha < r$.

\subsection*{Acknowledgements}

EAC was partially supported by a grant from the US National Science Foundation (DMS-0915160).
ARC was partially supported by Project MTM2008--03010 and MTM2013-41383-P from the
Spanish Ministry of Science
and Innovation, and by the IAP network StUDyS (Developing crucial Statistical methods for Understanding major complex Dynamic Systems in natural, biomedical and social sciences) of the Belgian Science Policy.

\bibliographystyle{chicago}
\bibliography{perimeter}

\end{document}